\newtheorem{lema}{Lemma}[section]
\newtheorem{teo}[lema]{Theorem}
\theoremstyle{definition}
\numberwithin{equation}{section}
\newcommand{\eg}{\emph{e.g.} }
\newcommand{\cf}{\emph{cf.} }
\newcommand{\tq}{\,|\,}
\newcommand{\ie}{\emph{i.e.}}
\newcommand{\apres}[2]{\langle #1 \tq #2\rangle}
\newcommand{\ed}{\ar@{-}}
\renewcommand{\epsilon}{\varepsilon}
\numberwithin{equation}{section}
\newcommand{\BZ}{\mathbb{Z}}
\begin{document}
\begin{frontmatter}

\title{Subgroup Separability of Artin Groups}

\author[1]{Kisnney Almeida\corref{cor1}}
\ead{kisnney@gmail.com}

\author[2]{Igor Lima}
\ead{igor.matematico@gmail.com}

\address[1]{Departamento de Ci\^encias Exatas,
Universidade Estadual de Feira de Santana, 44036-900, Feira de Santana-BA,
Brazil}

\address[2]{Departamento de Matem\'atica,
Universidade de Bras\'ilia,
70910-900, Bras\'ilia - DF,
Brazil}

\cortext[cor1]{Corresponding author}




\begin{abstract}
We find a condition on the underlying graph of an Artin group that fully determines if it is subgroup separable. As a consequence, an Artin group is subgroup separable if and only if it can be obtained from the irreducible spherical Artin groups of ranks 1 and 2 via a finite sequence of free products and direct products with the infinite cyclic group. This result generalizes the Metaftsis-Raptis criterion for Right-Angled Artin groups.
\end{abstract}

\begin{keyword}
  Artin groups \sep LERF \sep Generalized Tits Conjecture \MSC[2010]{20E26, 20F36}
\end{keyword}


\end{frontmatter}

\section{Introduction}

In this paper we investigate subgroup separability of Artin groups.

We begin by recalling that the cosets to the finite index normal subgroups of $G$ form a basis for the profinite topology of $G$. A group $G$  is called {\bfseries subgroup separable} or {\bfseries locally extended residually finite} or just {\bfseries LERF } if every finitely generated subgroup $H \leq G$ is closed in the profinite topology of $G$, or equivalently if every finitely generated subgroup of $G$ is equal to a intersection of subgroups of finite index of $G$.

Subgroup separability is a powerful property introduced by Hall \cite{H}, important for group theory and low-dimensional topology, but established either positively or negatively for very few classes of groups. It can be used to show that certain immersions lift to embeddings in a finite cover (\cite{S}, \cite{S2}, \cite{W}, \cite{LR}, \cite{Wi}). It is also known that, for finitely presented groups, Subgroup Separability implies the generalized word problem is solvable, \ie, there is an algorithm that decides for every element $g \in G$ and every finitely generated subgroup $H \leq G$ whether $g$ is in $H$ or not \cite{M}. Subgroup Separability is also related to other properties of groups (\cf \cite{Mi}, for example).

An Artin group is a finitely presented group defined from a finite simplicial labeled graph. Artin groups form a huge class of groups, which includes free groups, free abelian groups, the classical Braid groups and many other subclasses.  There are a lot of conjectures on Artin groups but most of them with no complete answer: for example, it is conjectured (and still open) that all Artin Groups are torsion-free, have solvable word problem, are centerless (except for the spherical ones) etc. These and many other facts and unanswered questions may be found at \cite{Ca}.

Metaftsis and Raptis \cite{MR} have obtained a complete criterion for the subgroup separability of an important subclass, the Right-Angled Artin Groups (RAAGs). The case of Braid groups was also solved by Dasbach and Mangum \cite{DM} but so far no general results on other subclasses of Artin groups have been explicitly obtained.

Jankiewicz and Schreve \cite{JS} have recently stated the Generalized Tits Conjecture, which presents for each Artin group a subgroup that is also a RAAG. They also prove the conjecture holds for some subclasses of Artin groups. That allowed us to generalize Metaftsis-Raptis criterion and establish a pattern for subgroup separable Artin groups.

From now on, all our graphs are finite, simplicial and have their edges labeled by integers greater than 1. We say two vertices are {\bfseries 2-adjacent} if they are adjacent by an edge labeled by 2. We define a {\bfseries 2-cone} of a graph $\Gamma'$ (with apex $u$) as the graph $\Gamma$ with $V(\Gamma)=V(\Gamma')\sqcup \{u\}$ and such that $u$ is 2-adjacent in $\Gamma$ to every vertex of $\Gamma'$. Let $\mathcal{S}$ be the class of graphs  obtained from graphs with at most two vertices by taking disjoint unions and 2-cones. Then our main result is

 \medskip
{\bf Theorem A.}
  {\it {Let $A=A(\Gamma)$ be an Artin group, with $\Gamma$ as its underlying graph. Then $A$ is subgroup separable if and only if $\Gamma\in \mathcal{S}$.}}\\

Let $A(\mathcal{S})$ be the smallest class of Artin groups such that:
\begin{enumerate}
  \item
  $A(\mathcal{S})$ contains all Artin groups of ranks at most 2;

  \item
  If $A, B\in A(\mathcal{S})$ then the free product $A*B\in A(\mathcal{S})$;

  \item
  If $A\in A(\mathcal{S})$ then $A\times \BZ\in A(\mathcal{S})$.
\end{enumerate}

We will see that $A(\mathcal{S})$ coincides with the class of Artin groups whose underlying graphs belong to $\mathcal{S}$, as expected from the notation.  So the following result is an easy consequence of Theorem A.

 \medskip
{\bf Corollary A.}
  {\it {An Artin group is subgroup separable if and only if it belongs to $A(\mathcal{S})$.}}\\

The paper is organized as follows: In section 2 we recall some preliminaries on Artin groups; in section 3 we establish some concepts and results on subgroup separability that we will need; in section 4 we prove the new results.

\section{Artin Groups}

Let $\Gamma$ be a finite simplicial graph, with edges labeled by integers greater than 1. Then the {\bfseries Artin group with  $\Gamma$ as underlying graph}, denoted by $A(\Gamma)$, is given by a finite presentation, with generators corresponding to the vertices of $\Gamma$ and relations given by

  \[  \underbrace{abab\cdots}_{m \text{ factors}}=\underbrace{baba\cdots}_{m \text{ factors}} \]

    for each edge of $\Gamma$ that connects the vertices $a$ and $b$ labeled by $m$. In that case we will say  $a$ and $b$ are {\bfseries $m$-adjacent}.

Let $V(\Gamma)$ be the set of vertices of $\Gamma$ and $E(\Gamma)$ be its set of edges. A subgraph $\gamma$ of $\Gamma$ is called a {\bfseries full} subgraph (generated by $V(\gamma)$) if every two vertices of $\gamma$ that are adjacent in $\Gamma$ are also adjacent in $\gamma$. We will say $\gamma$ is a {\bfseries full path} of $\Gamma$ if it is a path and a full subgraph of $\Gamma$.

It is known that a subset $V_0\subset V(\Gamma)$ generates a subgroup of $A(\Gamma)$ that is isomorphic to $A(\Gamma_0$), where $\Gamma_0$ is the full subgraph of $\Gamma$ generated by $V_0$ \cite{L} - such subgroups are called {\bfseries special subgroups} of $A(\Gamma)$.

Although the definition above is very simple, it is usually very hard to prove general results on Artin groups. That gave rise to relevant subclasses, some of which we present below.

An Artin group is called {\bfseries Right-Angled}, or just {\bfseries RAAG}, if all the labels of the edges of $\Gamma$ are equal to 2. If there is no ambiguity, for a RAAG it is usual to assume $\Gamma$ has no labels for simplicity of notation.

 An Artin group $A(\Gamma)$ is called {\bfseries of finite type} or {\bfseries spherical} if the quotient of $A(\Gamma)$ by the normal closure of $\{a^2 \tq a\in V(\Gamma)\}$ (which is always a Coxeter group) is finite. Just like Coxeter groups, a finite type Artin group is a direct product of {\bfseries irreducible} finite type Artin groups. The latter groups are divided into types, whose complete list we give below:

\begin{equation*}
 A_n (n\geq 1),\quad B_n (n\geq 2),\quad D_n (n\geq 4), \quad E_6,\quad E_7,\quad E_8,\quad F_4,\quad H_3,\quad H_4,\quad I_2(p)\, (p\geq 5).
 \end{equation*}

The detailed definitions for each of these groups may be found at \cite{P}, for example. It is worth mentioning that their underlying graphs are all complete (which means every two vertices are adjacent). The {\bfseries rank} of an irreducible spherical Artin group is the number of vertices in its underlying graph and may be identified by its type subscript. The irreducible spherical Artin groups are much more understood than most Artin groups. A well-known fact we need about these groups is that they have infinite cyclic center \cite{D}.

A remark on notations: as explained in the first pages of \cite{Ca}, there is some variability on the notations used for these groups and their underlying graphs. Under McCammond's classification, we use the modern terminology, although some of our references use the classical one, especially when mentioning the finite type Artin groups.

Finally, an Artin group $A(\Gamma)$ is called {\bfseries locally reducible} if all its spherical special subgroups are direct products of irreducible spherical Artin groups of rank 1 or 2. This is equivalent to the following condition: If $\Gamma$ contains a triangle whose edges are labeled $(2,m,n)$, with $m,n\geq 3$, then $\frac{1}{m}+\frac{1}{n}\leq \frac{1}{2}$ \cite{C}.

One of the few theorems that are known to hold for all Artin groups is the former called Tits Conjecture, now fully proved by Crisp and Paris after a series of partial results:

\begin{teo}[\cite{CP}]\label{teocp}
  For every $N\geq 2$ the subgroup of $A(\Gamma)$ generated by $\{v^N \tq v\in V(\Gamma)\}$ is a RAAG $A(\Gamma')$ such that $V(\Gamma')=V(\Gamma)$ and $v,w$ are adjacent in $\Gamma'$ if and only if $v,w$ are 2-adjacent in $\Gamma$.
\end{teo}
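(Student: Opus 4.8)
The plan is to set $\Lambda:=A(\Gamma')$ for the right-angled Artin group in the statement and to study the homomorphism $\psi\colon\Lambda\to A(\Gamma)$ sending each standard generator $v\in V(\Gamma')=V(\Gamma)$ to $v^{N}\in A(\Gamma)$. This map is well defined, since $v,w$ spanning an edge of $\Gamma'$ are $2$-adjacent in $\Gamma$, so that $vw=wv$ in $A(\Gamma)$ and hence $v^{N},w^{N}$ commute; and it carries $\Lambda$ onto $H:=\langle v^{N}\tq v\in V(\Gamma)\rangle$. Everything thus reduces to showing that $\psi$ is \emph{injective}, which I would prove by induction on $|V(\Gamma)|$, strengthening the inductive statement so as to also control how $H$ meets the special subgroups, namely $H\cap A(\gamma)=\langle v^{N}\tq v\in V(\gamma)\rangle$ for every full subgraph $\gamma$ — the refinement that makes the amalgam steps below efficient.

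Several reductions are routine. If $|V(\Gamma)|\le 2$, then $A(\Gamma)$ is free of rank at most $2$, free abelian of rank $2$, or a dihedral Artin group $A_{2}(m)$ with $m\ge 3$ (the Artin group of a single edge labelled $m$); in the first two cases $\psi$ is plainly an isomorphism onto $H$, and the third is the key case, handled next. If $\Gamma$ is disconnected, $\Gamma=\Gamma_{1}\sqcup\Gamma_{2}$, then $A(\Gamma)=A(\Gamma_{1})*A(\Gamma_{2})$ and correspondingly $\Lambda=A(\Gamma_{1}')*A(\Gamma_{2}')$, so $\psi$ is the free product of the two injections furnished by the induction hypothesis and is therefore injective, the parabolic refinement passing through a Kurosh-subgroup argument. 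Finally, if some vertex $v$ is $2$-adjacent to every other vertex of $\Gamma$, then $A(\Gamma)=A(\Gamma\smallsetminus v)\times\BZ$ and $\Lambda$ splits compatibly, so one concludes by the inductive hypothesis applied to $A(\Gamma\smallsetminus v)$.

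\emph{The dihedral case.} For $m\ge 3$ the group $A_{2}(m)$ has infinite cyclic centre $Z$, and the quotient $Q:=A_{2}(m)/Z$ is a nonelementary virtually free group — concretely a free product of two cyclic groups — hence acts on a simplicial tree $T$. A ping-pong argument on $T$ shows that the images of $a^{N}$ and $b^{N}$ generate a free subgroup of $Q$ of rank $2$, on those two elements as a basis, for every $N\ge 2$. Consequently the composite $\langle a^{N},b^{N}\rangle\into A_{2}(m)\onto Q$ is a surjection onto $F_{2}$ sending $a^{N},b^{N}$ to a free basis, which forces $\langle a^{N},b^{N}\rangle$ itself to be free of rank $2$; thus $\psi$ is an isomorphism onto $H$ here. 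The one nontrivial instance of the parabolic refinement, $\langle a^{N},b^{N}\rangle\cap\langle a\rangle=\langle a^{N}\rangle$, is again read off from the action on $T$.

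\emph{The remaining case, and the main obstacle.} What is left is $\Gamma$ connected with $|V(\Gamma)|\ge 3$ and no vertex $2$-adjacent to all the others. Then $\Lambda$ still splits nontrivially — as a free product if $\Gamma'$ is disconnected, and otherwise as an amalgamated product $A(\Gamma_{1}')*_{A(\gamma')}A(\Gamma_{2}')$ over a sub-RAAG, along the link of a non-universal vertex of $\Gamma'$ — and one would like to match this with a splitting of $A(\Gamma)$ over the corresponding special subgroup and conclude via Bass--Serre theory together with the parabolic refinement. The obstruction, where I expect essentially all of the difficulty to lie, is that $\Gamma$ has the \emph{same} vertex set as $\Gamma'$ but strictly more edges, so a splitting of $\Gamma'$ need not be induced by any splitting of $\Gamma$: an edge of $\Gamma$ labelled $\ge 3$ may run between the two sides — already when $\Gamma$ is a triangle with all labels equal to $3$, $\Lambda$ is the free group $F_{3}$ while $A(\Gamma)$ admits no useful graph-of-groups decomposition. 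Since moreover the word problem for a general Artin group is open, one cannot conclude by a direct computation in $A(\Gamma)$ either. This case must be attacked head-on, by a combinatorial analysis of van Kampen diagrams over the standard presentation of $A(\Gamma)$ whose boundary label lies in the subgroup generated by the $v^{N}$: the aim is to show that any such diagram can be reorganised so that every relation among the $v^{N}$ that it witnesses is already a consequence of the commutations between $2$-adjacent pairs, i.e.\ already holds in $\Lambda$. I expect this to be the crux — and the bulk — of any complete proof, as it is in the argument of Crisp and Paris.
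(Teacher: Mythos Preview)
The paper does not prove this theorem at all: it is stated as a known result with a citation to Crisp and Paris \cite{CP}, and is used only as a black box (once, in the proof of Lemma~\ref{lemasquare}). There is therefore no ``paper's own proof'' to compare your proposal against.

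As for the proposal itself: your outline is a reasonable roadmap, and you are candid that the decisive step --- the connected case with $|V(\Gamma)|\ge 3$ and no vertex $2$-adjacent to all others --- is where essentially all the content lies, and that it requires the diagrammatic machinery of Crisp--Paris rather than soft reductions. That honesty is appropriate, but it also means the proposal is not a proof: the reductions you give (disconnected graphs, a $2$-cone vertex, the dihedral base case) are correct and easy, while the remaining case is the entire theorem. In particular, your proposed inductive scheme with the parabolic refinement $H\cap A(\gamma)=\langle v^{N}\mid v\in V(\gamma)\rangle$ does not by itself make progress on that case, precisely for the reason you identify: a splitting of $\Gamma'$ need not come from a splitting of $\Gamma$, so Bass--Serre theory does not apply. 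One small correction in your dihedral paragraph: the quotient $A_{2}(m)/Z$ is $\BZ/m * \BZ/2$ when $m$ is odd but $\BZ/(m/2)*\BZ/(m/2)$-shaped when $m$ is even (more precisely, the central quotient structure depends on the parity of $m$, since the centre is generated by $(ab)^{m/\gcd(2,m)}$ up to the usual conventions); your ping-pong argument still goes through, but the description ``a free product of two cyclic groups'' should be checked against the parity.
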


Recently  Jankiewicz and Schreve \cite{JS} have stated a generalized version of Tits Conjecture, which we explain below.

If $A=A(\Gamma)$ is an Artin group,  let $\mathbb{S}=\mathbb{S}(A)$ be the set of all full subgraphs $\Sigma$ of $V(\Gamma)$ such that $A(\Sigma)$ is an irreducible spherical Artin group. If $\Sigma, \Lambda\in \mathbb{S}$ we say $[\Sigma,\Lambda]=1$ if every two distinct vertices $a\in V(\Sigma)$ and $b\in V(\Lambda)$  are 2-adjacent in $\Gamma$. Let $R(A)$ be the RAAG generated by the set $\{z_{\Sigma}\}_{\Sigma\in \mathbb{S}}$ with presentation
$$R(A)=\apres{z_{\Sigma}}{ [z_{\Sigma},z_{\Lambda}]=1 \text{ if  $\Sigma\subset \Lambda$, $\Lambda\subset \Sigma$ or $[\Sigma, \Lambda]=1$}}.$$

{\bfseries Remark:} To ease notation, we slightly changed the definition of $R(A)$; we are identifying any set of vertices with the full subgraph generated by them. It's not a problem because there is a natural bijection between those two sets. We will do the same in the proofs of section 4.

Jankiewicz and Schreve define a homomorphism $\phi_N:R(A)\to A$ for each natural $N$ and conjecture that for each Artin group there is a $N$ such that $\phi_N$ is injective. We don't need all the details, so we will use a short version of the conjecture:

 \medskip
{\bf Generalized Tits Conjecture (short version)}
  {\it Let $A=A(\Gamma)$ be an Artin group. Then there is a subgroup of $A$ that is isomorphic to the group $R(A)$ defined above.}\\

The original conjecture obviously implies this short version. The authors have proven the conjecture holds for a lot of Artin groups, which we list below:

\begin{teo}[\cite{JS}]\label{teojs}
The Generalized Tits Conjecture holds for Artin groups $A(\Gamma)$ such that
\begin{enumerate}
  \item
  $\Gamma$ has no edges labeled by 3;

  \item
  $A(\Gamma)$ is locally reducible;

  \item
  $A(\Gamma)$ is irreducible spherical and not of type $E_6$, $E_7$, $E_8$.
\end{enumerate}
\end{teo}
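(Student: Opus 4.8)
\emph{Proof idea.}
The goal is to produce, for a suitably large $N$, an injective homomorphism $R(A)\to A$; the natural candidate is the map $\phi_N$, which sends each generator $z_\Sigma$ of $R(A)$ to the $N$-th power of the chosen generator of the infinite cyclic center of the special subgroup $A(\Sigma)$. That $\phi_N$ is a well-defined homomorphism is formal: if $\Sigma\subseteq\Lambda$ then $z_\Sigma\in A(\Sigma)\leq A(\Lambda)$ and $z_\Lambda$ is central in $A(\Lambda)$, so $\phi_N(z_\Sigma)$ and $\phi_N(z_\Lambda)$ commute, and if $[\Sigma,\Lambda]=1$ then $A(\Sigma)$ and $A(\Lambda)$ commute elementwise in $A(\Gamma)$. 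Hence all the content is in proving that $\phi_N$ is injective, i.e.\ that the images of the $z_\Sigma$ satisfy no relation beyond the defining relations of the RAAG $R(A)$. The uniform mechanism I would use is a ping-pong argument in the spirit of the Tits alternative, combined with the standard normal form for RAAGs: $A(\Gamma)$ acts on a contractible complex $X$ (the Deligne complex, or the Artin complex of cosets of spherical parabolics) on which each $A(\Sigma)$, $\Sigma\in\mathbb{S}$, stabilizes a well-understood cell; to each generator $z_\Sigma^N$ one attaches a pair of ping-pong domains in $X$, disjoint for pairs $z_\Sigma,z_\Lambda$ that are \emph{not} joined by an edge of $R(A)$ and kept nest- or product-compatible for those that are; evaluating a reduced RAAG word on a well-chosen basepoint then moves it, so the word is nontrivial in $A$.

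The first concrete step is to pin down $\mathbb{S}=\mathbb{S}(A)$ in each of the three cases. If $\Gamma$ has no edge labelled $3$, then inspection of the finite-type diagrams shows that every irreducible spherical full subgraph is a vertex or an edge labelled $\geq 4$. If $A(\Gamma)$ is locally reducible, then $\mathbb{S}$ consists of the vertices together with the edges labelled $\geq 3$. If $A(\Gamma)$ is irreducible spherical, then $\mathbb{S}$ is the poset of all standard spherical parabolics. In every case $R(A)$ is then an explicitly described RAAG, and $\phi_N(z_\Sigma)$ an explicit power of a Garside-type element $\Delta_\Sigma$: for a rank-$\leq 2$ piece $\Delta_\Sigma$ is $v$ (type $A_1$), $(ab)^{m/2}$ ($I_2(m)$ with $m$ even), or $(ab)^{m}$ ($I_2(m)$ with $m$ odd).

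For the first two cases I would develop a combinatorial ping-pong generalizing the Crisp--Paris solution of the classical Tits conjecture~\cite{CP}. Each $z_\Sigma^N$ is a deep element --- it is central in the parabolic $A(\Sigma)$, hence fixes the cell of $X$ stabilized by $A(\Sigma)$ while acting with large rotation on its link --- so a large enough power separates a chosen half-space of $X$ from its complement. The generators new relative to Crisp--Paris (whose theorem concerns only squares of vertices) are the $z_\Sigma$ attached to rank-$2$ spherical edges, and it is precisely the hypotheses ``no edge labelled $3$'' or ``locally reducible'' that keep the relevant part of $X$ sufficiently non-positively curved for the domains to be separated; the uncontrolled case is that of spherical full subgraphs of rank $\geq 3$, whose geometry is not understood (which is why the Generalized Tits Conjecture in full generality is not reached by this method). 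In the locally reducible case one additionally runs an induction on $|V(\Gamma)|$ via the graph-of-groups decomposition of $A(\Gamma)$ over its spherical parabolics, with the rank-$\leq 2$ groups as base case and the Euclidean-triangle pieces $(2,m,n)$ with $\frac{1}{m}+\frac{1}{n}=\frac{1}{2}$ treated separately. For the irreducible spherical case (excluding $E_6$, $E_7$, $E_8$) I would instead reduce to known embeddings: types $A_n$, $B_n$, $D_n$ sit inside braid groups of punctured discs and annuli, where the $z_\Sigma$ become powers of Dehn twists and multitwists, so the statement becomes Koberda's theorem that sufficiently high powers of a system of mapping classes generate the RAAG on their commutation graph; the case $I_2(p)$ follows readily from the classical Tits conjecture; and $F_4$, $H_3$, $H_4$ are handled directly from their Garside structure.

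The main obstacle is \emph{uniformity}: a single $N$ must work for every generator at once. Concretely, $N$ must be large enough that the displacement of each $z_\Sigma^N$ dominates both the combinatorial size of the links of the relevant cells of $X$ and the width of the worst non-CAT(0) parts of $X$; at the same time the domains attached to a nested pair $\Sigma\subset\Lambda$ must remain anchored at the cell fixed by the larger parabolic, while the domains attached to a commuting but non-nested pair must be genuinely product-like, so that this pair generates a $\mathbb{Z}^2$ and not a free group. Producing such an explicit, quantitative ping-pong across the whole complex at once is the crux of the argument; once $N$ has been fixed, a nontrivial cyclically reduced word in the RAAG normal form pushes the basepoint out of a neutral region and cannot bring it back, so $\phi_N$ is injective.
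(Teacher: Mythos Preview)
The paper does not prove this theorem at all: it is quoted verbatim from Jankiewicz--Schreve~\cite{JS} and used as a black box. There is therefore no ``paper's own proof'' to compare your proposal against; the authors simply cite the result and move on.

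As for your sketch itself: it is a reasonable high-level summary of the strategy actually used in~\cite{JS}. Your identification of $\mathbb{S}$ in each of the three cases is correct, the definition of $\phi_N$ via powers of central Garside-type elements is the right one, and your plan for the irreducible spherical case (embed $A_n$, $B_n$, $D_n$ in mapping class groups and invoke Koberda's theorem on high powers of Dehn twists; treat $F_4$, $H_3$, $H_4$ by hand) matches what Jankiewicz and Schreve do. That said, what you have written is a roadmap, not a proof: the ping-pong you describe on the Deligne complex is gestured at rather than set up, the ``uniformity'' obstacle you flag at the end is exactly the technical heart of the argument and you have not resolved it, and the separate treatment of the Euclidean-triangle pieces in the locally reducible case is asserted but not carried out. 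None of this is wrong, but none of it is complete either; if the goal were to supply an independent proof, substantial work would remain. For the purposes of the present paper, the correct move is simply to cite~\cite{JS}.
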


\section{Subgroup Separability}

First we recall the behaviour of subgroup separability on subgroups.

\begin{teo}[\cite{S},\cite{S2}]\label{teosublerf}
Let $H$ be a subgroup of $G$. If $H$ is not LERF then $G$ is not LERF either. The converse holds if $[G:H]<\infty$.
\end{teo}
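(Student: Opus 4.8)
The plan is to work throughout with the standard reformulation: a group $G$ is LERF if and only if, for every finitely generated subgroup $K\le G$ and every $g\in G\setminus K$, there is a finite-index subgroup $L\le G$ with $K\le L$ and $g\notin L$. I would prove the two assertions of the statement separately.

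For the first assertion I would prove the contrapositive, namely that every subgroup of a LERF group is LERF. Let $G$ be LERF, $H\le G$, and $K\le H$ finitely generated. Since $K$ is also a finitely generated subgroup of $G$, it is an intersection $K=\bigcap_i G_i$ of finite-index subgroups $G_i\le G$ each containing $K$. Intersecting with $H$ gives $K=K\cap H=\bigcap_i(G_i\cap H)$, and each $G_i\cap H$ has finite index in $H$ and contains $K$; hence $K$ is closed in the profinite topology of $H$, so $H$ is LERF.

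For the converse, assume $[G:H]<\infty$ and $H$ LERF. The plan is first to reduce to the case $H\trianglelefteq G$: the normal core $N_0=\bigcap_{x\in G}xHx^{-1}$ has finite index in $G$ and is contained in $H$, hence is LERF by the first assertion, so replacing $H$ by $N_0$ we may assume $H\trianglelefteq G$, of finite index, and LERF. Now take a finitely generated $K\le G$ and $g\in G\setminus K$. If $g\notin KH$, then, $H$ being normal, $KH$ is a finite-index subgroup of $G$ containing $K$ but not $g$, and we are done. If $g\in KH$, write $g=kh$ with $k\in K$ and $h\in H$; then $h=k^{-1}g\notin K$, so $h\notin K_1:=K\cap H$. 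The subgroup $K_1$ has index at most $[G:H]$ in $K$, hence is finitely generated, so since $H$ is LERF there is a finite-index $M\le H$ with $K_1\le M$ and $h\notin M$. Let $N'=\bigcap_{x\in G}xMx^{-1}$, a finite-index normal subgroup of $G$ with $N'\le M\le H$, and set $L:=KN'$. Then $L$ is a finite-index subgroup of $G$ (it contains $N'$) containing $K$, and $g\notin L$: by the modular law, $H\cap KN'=(H\cap K)N'=K_1N'\le M$, so $h\notin KN'$ and hence $g=kh\notin L$. In every case $g$ is separated from $K$ by a finite-index subgroup, so $G$ is LERF.

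The hard part will be this last case: separability of $H$ only yields a finite-index subgroup $M\le H$ containing $K_1=K\cap H$, whereas what is needed is a finite-index subgroup of $G$ containing all of $K$. Passing from $M$ to its normal core $N'$ in $G$ and forming $L=KN'$ resolves this, and the verification that $g\notin L$ comes down precisely to the identity $H\cap KN'=K_1N'$ furnished by the modular law. A minor point to keep in mind is the standard fact that a finite-index subgroup of a finitely generated group is finitely generated, which is exactly what allows $K_1$ to be fed into the separability hypothesis for $H$.
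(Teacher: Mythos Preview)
Your argument is correct. The first assertion is the standard observation that intersecting finite-index subgroups of $G$ with $H$ yields finite-index subgroups of $H$, so closedness in $G$ restricts to closedness in $H$. For the converse, your reduction to a normal finite-index subgroup via the core is the right move, and the separation argument in the case $g\in KH$ is carried out cleanly; the key identity $H\cap KN'=K_1N'$ does follow from the modular (Dedekind) law since $N'\le H$, and then $K_1N'\le M$ gives the contradiction with $h\notin M$. The auxiliary facts you flag --- finite generation of $K_1=K\cap H$ and finiteness of $[G:N']$ --- are exactly the ones needed and are handled correctly.

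As for comparison with the paper: there is nothing to compare. The paper does not prove Theorem~\ref{teosublerf}; it merely records the statement and cites Scott \cite{S,S2}. Your write-up would serve perfectly well as a self-contained proof of this background result, and it follows the classical line one finds in Scott's work and standard expositions.
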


We strongly use the Metaftsis-Raptis criterion on subgroup separability for Right-Angled Artin groups.

\begin{teo}[\cite{MR}]\label{teoraag}
  A Right-Angled Artin Group $A(\Gamma)$ is LERF if and only if $\Gamma$ does not contain a path of length three nor a square as full subgraphs.
\end{teo}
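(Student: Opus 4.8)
The plan is to prove the two implications separately, with the path $P_4$ on four vertices (``a path of length three'') and the square $C_4$ playing the role of the two minimal obstructions. For the forward implication I argue by contraposition: if $\Gamma$ contains a full $P_4$ or a full $C_4$, then $A(\Gamma)$ contains the corresponding special subgroup $A(P_4)$ or $A(C_4)$, so by Theorem~\ref{teosublerf} it suffices to check that neither of these is LERF. Now $A(C_4)\cong F_2\times F_2$, whose non-separability is classical: it contains a finitely generated Mihailova subgroup (a fibre product over a suitable finitely presented quotient of $F_2$ with unsolvable word problem) that is not closed in the profinite topology. For $A(P_4)$ one must produce a finitely generated non-separable subgroup by hand; I would exploit the amalgam decomposition $A(P_4)\cong(\BZ\times F_2)*_{\BZ^2}(\BZ\times F_2)$ obtained by gluing the two full $P_3$'s of $P_4$ along their shared edge, and run a Mihailova-type construction over it. I expect this to be the more delicate half of this direction.

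For the converse I would first prove a purely graph-theoretic statement: a finite simplicial graph has no full $P_4$ and no full $C_4$ if and only if it lies in $\mathcal S$. This goes by induction on $|V(\Gamma)|$. A graph with no induced $P_4$ is a cograph, so a connected $\Gamma$ with at least two vertices is a join $\Gamma=\Gamma_1+\Gamma_2$; the absence of a full $C_4$ forces one factor, say $\Gamma_2$, to be complete, since two non-adjacent vertices of $\Gamma_1$ together with two non-adjacent vertices of $\Gamma_2$ would span a full $C_4$. Then for any $v\in V(\Gamma_2)$ the graph $\Gamma$ is the 2-cone with apex $v$ over the full subgraph on $V(\Gamma)\setminus\{v\}$, which still has no full $P_4$ or $C_4$ and has fewer vertices; the disconnected case is handled componentwise. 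Under this dictionary a disjoint union becomes a free product and a 2-cone becomes a direct product with $\BZ$, so for $\Gamma\in\mathcal S$ the group $A(\Gamma)$ is built from the rank-$\le 2$ RAAGs $1,\BZ,\BZ^2,F_2$ by iterated free products and iterated direct products with $\BZ$.

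It then remains to show that LERF is preserved by these two operations. For free products this is a classical theorem (Romanovskii). For direct product with $\BZ$ — the claim that $G\times\BZ$ is LERF whenever $G$ is, at least for the $G$ arising above — I expect the real obstacle. Given a finitely generated $H\le G\times\BZ$, I would consider its image under projection to $\BZ$: if that image is trivial then $H\le G$, and separability of $H$ in $G\times\BZ$ follows because $G$ is a continuous retract of $G\times\BZ$ and is itself a closed subgroup; if the image is $k\BZ$ with $k\ge 1$, then $H$ is an extension of $H\cap G$ by $\BZ$, and the difficulty is precisely that $H\cap G$ need not be finitely generated. To finish I would use the explicit free-product structure of $G$ (with free-abelian and free vertex groups, so that its subgroups are controlled by the Kurosh subgroup theorem) to show that $H\cap G$ is still separable in the presence of the $\BZ$-translation, and then push finite-index subgroups of $G$ that separate the relevant data forward to finite-index subgroups of $G\times\BZ$; choosing a finite-index subgroup of $G\times\BZ$ in which the translation length and $H\cap G$ interact cleanly is the step that will need the most care.
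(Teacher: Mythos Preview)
First, note that the paper does not supply its own proof of this theorem: it is quoted from \cite{MR} and used as a black box. So there is no ``paper's proof'' to compare against; I can only assess your argument on its own merits.

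Your outline has two genuine gaps.

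For the non-LERF direction, your treatment of $A(P_4)$ is not a proof but a hope: you write that you would ``run a Mihailova-type construction'' over the amalgam $(\BZ\times F_2)*_{\BZ^2}(\BZ\times F_2)$, but Mihailova's argument is specific to a direct product $F\times F$ (it needs the two projections to free groups) and does not transfer to such amalgams in any routine way. The known proof that $A(P_4)$ is not LERF (Niblo--Wise, cited in the paper as \cite{NW}) proceeds via 3-manifold topology: $A(P_4)$ is the fundamental group of a graph manifold --- the complement of a chain of four unknotted circles --- and one exhibits an immersed incompressible surface that lifts to an embedding in no finite cover. No purely combinatorial Mihailova-style argument for this group appears in the literature, and you have not supplied one.

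For the LERF direction, your difficulty with $G\times\BZ$ is self-inflicted. You correctly identify that $H\cap G$ need not be finitely generated, and your proposed workaround via the Kurosh subgroup theorem is vague and in any case tailored only to the particular $G$ arising in your induction. But the implication ``$G$ LERF $\Rightarrow$ $G\times\BZ$ LERF'' holds for \emph{every} group $G$ and has a one-line proof: $\BZ$ is ERF (all its subgroups are finitely generated, hence closed), and by Allenby--Gregorac \cite{AG} any semidirect product of an ERF group by a LERF group is LERF. The paper records exactly this as Theorem~\ref{teoerf} and Lemma~\ref{lemalerfzn}. Your graph-theoretic reduction via cographs is correct and clean; once you replace your two hand-made arguments by the citations \cite{NW} and \cite{AG}, the proof goes through.
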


The subgraphs mentioned in Theorem \ref{teoraag} correspond to two non-LERF possible subgroups of $A(\Gamma)$: the first is the fundamental group of the complement of
the chain of four circles \cite{NW} (known as $L$) and the second is isomorphic to $F_2\times F_2$, where $F_2$ is the free group of rank 2.

We will need some other results on subgroup separability:

\begin{teo}[\cite{B}]\label{teofreeprod}
  A free product of LERF groups is LERF.
\end{teo}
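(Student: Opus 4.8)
The plan is to reduce the statement to a single separation problem and then feed the structure theory of subgroups of free products into the profinite topology. Write $G=A*B$ with $A,B$ LERF; the case of more than two factors is entirely analogous, since the Kurosh decomposition of a finitely generated subgroup only ever involves finitely many of them. Fix a finitely generated $H\leq G$ and an element $g\in G\setminus H$. I must build a finite index subgroup $K\leq G$ with $H\leq K$ and $g\notin K$.

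First I recall the structural backbone. By the Kurosh subgroup theorem,
\[H=F_{0}*H_{1}*\cdots*H_{n},\]
where $F_{0}$ is free and each $H_{i}$ is a nontrivial subgroup of some conjugate of $A$ or of $B$; and since $H$ is finitely generated, Grushko's theorem forces $n$ to be finite and each free factor to be finitely generated. Thus $F_{0}$ is LERF by M.\ Hall's theorem \cite{H} that free groups are LERF, and each $H_{i}$, being a finitely generated subgroup of a conjugate of $A$ or $B$, is LERF by Theorem~\ref{teosublerf}.

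Next I would record that the profinite topology of $G$ behaves well with respect to the factors: for every finite index $A_{0}\leq A$ there is a finite index $G_{0}\leq G$ with $G_{0}\cap A\subseteq A_{0}$, and symmetrically for $B$. Indeed, taking $A_{0}$ and $B_{0}$ normal of finite index and composing the canonical map $G=A*B\onto A/A_{0}*B/B_{0}$ with a finite quotient of the (virtually free, hence residually finite) group $A/A_{0}*B/B_{0}$ into which the finite subgroup $A/A_{0}$ injects, one obtains such a $G_{0}$. Consequently the profinite topology of $G$ restricts to the full profinite topology on $A$, on $B$, and on each of their conjugates, and each such conjugate is closed in $G$.

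The final step --- assembling $K$ --- is where the real work lies, and I expect it to be the main obstacle. The natural language is Bass--Serre theory: $G$ acts on the tree $T$ of the splitting $A*B$, the quotient graph of groups $H\backslash T$ realizes the Kurosh decomposition (trivial edge groups, vertex groups the $H_{i}$, and loops producing $F_{0}$), and finite generation of $H$ makes $H\backslash T$ have a finite connected core. Writing $g$ in normal form relative to $H$, one constructs $K$ by ``closing up'' this finite core: inside each relevant vertex group one uses the LERF-ness of the ambient factor $A$ or $B$ together with the compatibility of topologies just established to choose a finite index subgroup separating the finitely many syllables of $g$ that land in that factor, and one then glues these choices along the trivial edge groups --- much as in M.\ Hall's construction of finite covers of a rose, but with the vertex groups' own finite quotients grafted in. The delicate point, and the heart of the theorem, is that these local choices must be coordinated so that they patch into a genuine finite index subgroup of $G$ containing all of $H$ and missing $g$; this is customarily achieved by induction on the syllable length of the normal form of $g$, stripping off one syllable at a time and invoking separability in the appropriate vertex group to stay inside a finite index subgroup at each stage.
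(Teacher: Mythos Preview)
The paper does not prove this theorem at all; it is simply quoted as a known result of Burns \cite{B} and invoked as a black box in the proof of Theorem~A. So there is no in-paper argument to compare your proposal against.

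That said, your outline is recognisably the standard route to Burns's theorem: Kurosh plus Grushko to get a finite free decomposition of $H$ with finitely generated factors, compatibility of the profinite topology of $G=A*B$ with that of each factor (your argument for this is correct), and then a combinatorial assembly of a finite-index $K\geq H$ missing $g$. The first two steps are fine as written. The third step, as you yourself flag, is where all the substance lies, and what you have is a sketch rather than a proof: the induction on syllable length must be arranged so that at each stage the finite-index subgroup one passes to still contains all of $H$, and coordinating this across the several conjugates of $A$ and $B$ appearing in the Kurosh decomposition of $H$ is precisely the bookkeeping that occupies Burns's paper. If you intend this to stand on its own you will need to make that induction explicit, or else cite a cleaner modern repackaging (for instance a graph-of-groups or covering-space version); as it stands the proposal correctly identifies the architecture but does not execute the load-bearing step.
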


\begin{teo}[\cite{MT}]\label{teoonerel}
  Every one-relator group with non-trivial center is LERF.
\end{teo}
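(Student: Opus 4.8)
The plan is to derive this from the classical structure theory of one-relator groups together with the Metaftsis--Raptis criterion (Theorem~\ref{teoraag}). So let $G$ be a one-relator group with $Z(G)\neq 1$. If $G$ is finite or abelian, then $G$ is isomorphic to a finite cyclic group, to $\BZ$, or to $\BZ^2$, and in each of these cases $G$ is polycyclic, hence LERF by Mal'cev's theorem (alternatively, $\BZ$ and $\BZ^2$ are Right-Angled Artin groups whose defining graph has at most two vertices, so Theorem~\ref{teoraag} applies directly). In all remaining cases I would invoke Murasugi's theorem on the centre of a one-relator group: it yields that $G$ is two-generated, that $C:=Z(G)$ is infinite cyclic, and that the quotient $Q:=G/C$ is a free product of finitely many cyclic groups; in particular $Q$ is virtually free.

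Next I would pass to a finite-index subgroup on which the central extension splits. Choose a free subgroup $F\leq Q$ of finite index and let $\widetilde G\leq G$ be its full preimage, so that $[G:\widetilde G]<\infty$ and $\widetilde G$ sits in a central extension $1\to C\to\widetilde G\onto F\to 1$ with $C\cong\BZ$. Since $F$ is free, the surjection $\widetilde G\onto F$ admits a section, and because $C$ is central this exhibits $\widetilde G$ as a \emph{direct} product: $\widetilde G\cong\BZ\times F\cong\BZ\times F_n$ for some $n\geq 0$, where $F_n$ is the free group of rank $n$.

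It then remains to see that $\BZ\times F_n$ is LERF, which is immediate from Theorem~\ref{teoraag}: it is the Right-Angled Artin group on the star graph $\Gamma_n$ with centre $t$ and leaves $x_1,\dots,x_n$, and $\Gamma_n$, being a tree of diameter at most two, contains neither a full path of length three nor a square as a full subgraph. Finally, since $\widetilde G\cong\BZ\times F_n$ is LERF and $[G:\widetilde G]<\infty$, Theorem~\ref{teosublerf} gives that $G$ itself is LERF.

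The hard part is concentrated entirely in the structural input of the first paragraph: Murasugi's classification of one-relator groups with non-trivial centre — equivalently, the fact that such a group is either virtually polycyclic or a ``torus-knot-type'' group with infinite cyclic centre and virtually free central quotient. Once that is granted, the remaining steps are soft: the splitting of central extensions of free groups, the identification of $\BZ\times F_n$ with a RAAG on a star, the Metaftsis--Raptis criterion, and the stability of subgroup separability under passage to finite-index overgroups. The one point demanding a little care is the bookkeeping of the exceptional small cases (finite cyclic, $\BZ$, $\BZ^2$, the Klein bottle group), but these are all virtually polycyclic and hence are absorbed either by Mal'cev's theorem or, after passing to a finite-index subgroup, by the $\BZ\times F_n$ argument above.
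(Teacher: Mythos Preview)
The paper does not prove Theorem~\ref{teoonerel}; it is quoted from Moldavanskii--Timofeeva \cite{MT} and used as a black box, so there is no in-paper argument to compare against.

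Your route is sound. The only substantive input is the structure theorem you invoke in the first paragraph: for a non-abelian one-relator group $G$ with $Z(G)\neq 1$, the centre is infinite cyclic and $G/Z(G)$ is a free product of finitely many cyclic groups, hence virtually free. Strictly speaking this is not ``Murasugi's theorem'' alone --- Murasugi showed such a $G$ is two-generated; the full description of $G$ (equivalently, of $G/Z(G)$) is due to Pietrowski, building on Murasugi and Baumslag--Taylor. You clearly flag this as the hard step, so the attribution is a minor point, but it is worth naming Pietrowski if you keep this argument.

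Once that input is granted, the rest is exactly as you say: pull back a finite-index free subgroup of $G/Z(G)$ to get a finite-index $\widetilde G\leq G$ that is a central extension of a free group by $\BZ$; freeness forces the extension to split, and centrality makes the splitting a direct product, so $\widetilde G\cong \BZ\times F_n$; this is the RAAG on a star, which has no induced $P_4$ or $C_4$, so Theorem~\ref{teoraag} gives LERF; then Theorem~\ref{teosublerf} pushes LERF up to $G$. The small cases (finite cyclic, $\BZ$, $\BZ^2$, Klein bottle) are all virtually abelian and are handled either directly or by the same finite-index argument. No step is circular with the paper: you only use Theorems~\ref{teosublerf} and~\ref{teoraag}, both stated before Theorem~\ref{teoonerel}. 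As a stylistic alternative within this paper you could replace the appeal to Theorem~\ref{teoraag} by Hall's theorem that free groups are LERF together with Lemma~\ref{lemalerfzn}, but what you wrote is already correct.
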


It is also a well-known fact that every finitely generated abelian group is LERF.

Although it is not true in general that subgroup separability is preserved by direct products, it happens to be true if we add an extra hypothesis. We say a group $G$ is {\bfseries Extended Residually Finite} or just {\bfseries ERF} if every (not necessarily finitely generated) subgroup of $G$ is closed in the profinite topology of $G$.

\begin{teo}[\cite{AG}, Th. 4]\label{teoerf}
  Let $N$ be an ERF group and $M$ be a LERF group. Then any semidirect product $N\rtimes M$ is LERF.
\end{teo}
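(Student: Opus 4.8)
The plan is to verify the definition of LERF directly. Write $G=N\rtimes M$ and let $\pi\colon G\to M$ be the projection. Given a finitely generated subgroup $H\le G$ and an element $g\in G\setminus H$, I must exhibit a finite-index subgroup of $G$ containing $H$ but not $g$. Put $H_0:=H\cap N$, a subgroup of $N$ which is normalized by $H$ since $N\trianglelefteq G$, and $\bar H:=\pi(H)$, a finitely generated subgroup of $M$.

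The first step is to dispose of the direction along $M$. If $\pi(g)\notin\bar H$, then since $M$ is LERF there is a finite-index subgroup $L\le M$ with $\bar H\le L$ and $\pi(g)\notin L$, and $\pi^{-1}(L)$ is a finite-index subgroup of $G$ that contains $H$ and misses $g$. So we may assume $\pi(g)\in\bar H$; choosing $h_0\in H$ with $\pi(h_0)=\pi(g)$ and replacing $g$ by $h_0^{-1}g$ (a finite-index subgroup containing $H$ separates $g$ from $H$ exactly when it separates $h_0^{-1}g$ from $H$), we are reduced to the case $g\in N\setminus H_0$.

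Now the ERF hypothesis is invoked: $H_0$ is a subgroup of the ERF group $N$, hence closed in the profinite topology of $N$, so there is a finite-index subgroup $N_1\le N$ with $H_0\le N_1$ and $g\notin N_1$. What remains — the heart of the matter — is to \emph{promote} $N_1$ to a finite-index subgroup of $G$ that still contains $H$ and omits $g$. The construction I would aim for is: a finite-index subgroup $\tilde N\le N$ with $H_0\le\tilde N$ and $g\notin\tilde N$ that is, in addition, normalized by $H$ and has only finitely many conjugates in $G$. From such an $\tilde N$ one sets $\Gamma:=N_G(\tilde N)$, a finite-index subgroup of $G$ containing $H$, so that $\tilde N\trianglelefteq\Gamma$ and $g\notin H\tilde N$ (because $g\in N$ while $H\cap N=H_0\le\tilde N$, whence $(H\tilde N)\cap N=\tilde N$); the quotient $\Gamma/\tilde N$ is an extension of the LERF group $\pi(\Gamma)\le M$ by a finite group, and separating the image of $g$ from the finitely generated image of $H$ there — using the LERF property of $\pi(\Gamma)$ together with an auxiliary statement treating finite normal subgroups — and pulling back produces the required finite-index subgroup of $G$.

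The step I expect to be the crux is the construction of $\tilde N$, and it is exactly here that the full strength of ERF — that \emph{every} subgroup of $N$, not only every finitely generated one, is closed — is needed. The difficulty is that $H_0=H\cap N$ is normalized by $H$ but in general by neither $N$ nor $M$: replacing $N_1$ by its normal core in $G$, the obvious way to obtain a normal finite-index subgroup, can destroy the inclusion $H_0\le N_1$, whereas intersecting only the $H$-conjugates of $N_1$ retains $H_0$ but may be an infinite intersection — hence not of finite index — when $N$ is not finitely generated. Arranging "finite index in $N$", "contains the possibly infinitely generated subgroup $H_0$", and "invariant under a large enough subgroup of $G$" all at once, by applying the ERF property to suitable auxiliary subgroups of $N$ manufactured from the relevant orbits of conjugates, is where the genuine work of the proof lies.
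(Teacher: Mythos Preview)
The paper does not prove this theorem: it is quoted as a black box from Allenby--Gregorac \cite{AG}, with no argument given. There is therefore no ``paper's own proof'' to compare your attempt against.

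Regarding your sketch on its own merits: the reduction to the case $g\in N\setminus H_0$ via the projection $\pi$ is standard and correct, and you have accurately located the real obstacle. But what you have written is an outline with its central step explicitly left open. You yourself say that constructing an $\tilde N$ which simultaneously has finite index in $N$, contains the (possibly infinitely generated) subgroup $H_0$, and is normalized by enough of $G$ ``is where the genuine work of the proof lies'' --- and then you stop. That is not a proof; it is a description of what a proof would need to accomplish. In addition, the endgame invokes ``an auxiliary statement treating finite normal subgroups'' (essentially that a finite-by-LERF group is LERF), which is itself a lemma that would have to be supplied. If your intention is to reproduce the Allenby--Gregorac argument, you will need to actually carry out the promotion step rather than merely name the tension it must resolve.
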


\begin{lema}\label{lemalerfzn}
  Let $G$ be a group. If $G$ is LERF then $G\times \BZ$ is LERF.
\end{lema}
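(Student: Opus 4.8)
The plan is to realize $G\times\BZ$ as a semidirect product with an ERF normal factor and then invoke Theorem \ref{teoerf}. Write $G\times\BZ$ as the (trivial) semidirect product $\BZ\rtimes G$, where the copy of $\BZ$ is the normal factor and $G$ acts trivially on it. By hypothesis $G$ is LERF, so it remains only to check that $\BZ$ is ERF; then Theorem \ref{teoerf} immediately gives that $\BZ\rtimes G=G\times\BZ$ is LERF.

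To see that $\BZ$ is ERF, I would argue directly from the description of its subgroups. Every subgroup of $\BZ$ is either trivial or of the form $n\BZ$ for some positive integer $n$. Each $n\BZ$ has finite index in $\BZ$, hence its complement is a finite union of cosets of a finite-index (normal) subgroup and is therefore open, so $n\BZ$ is closed in the profinite topology. The trivial subgroup equals $\bigcap_{n\geq 1} n\BZ$, an intersection of closed sets, hence is closed as well. Thus every subgroup of $\BZ$ is closed in the profinite topology, i.e.\ $\BZ$ is ERF.

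With $N=\BZ$ ERF and $M=G$ LERF, Theorem \ref{teoerf} yields that $G\times\BZ$ is LERF, which is the claim. I do not expect any genuine obstacle here: the substantive work is entirely contained in Theorem \ref{teoerf}, and the only point requiring verification is the elementary fact that $\BZ$ is ERF.
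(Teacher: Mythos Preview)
Your argument is correct and is essentially the paper's proof: both write $G\times\BZ$ as a semidirect product with normal factor $\BZ$, observe that $\BZ$ is ERF, and conclude via Theorem \ref{teoerf}. The only cosmetic difference is that the paper deduces $\BZ$ is ERF from ``$\BZ$ is LERF and every subgroup of $\BZ$ is finitely generated'', whereas you verify closedness of each subgroup directly.
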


\begin{proof}
  Since $\BZ$ is LERF (since it is abelian) and every subgroup of $\BZ$ is finitely generated then $\BZ$ is ERF. So the result follows from Theorem \ref{teoerf}.
\end{proof}

\section{Proof of new results}

From now on, $A(\Gamma)$ is always an Artin group as explained in section 2.

To prove our results, we need the Generalized Tits Conjecture for some specific Artin groups.

\begin{lema}\label{lemapre3v}
The Generalized Tits Conjecture (short version) holds for $A=A(\Gamma)$ if $\Gamma$ is connected and has exactly three vertices.
\end{lema}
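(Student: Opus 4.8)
The plan is to deduce this from the Jankiewicz--Schreve theorem (Theorem~\ref{teojs}): it is enough to verify that, for every connected graph $\Gamma$ with exactly three vertices, the group $A(\Gamma)$ satisfies at least one of the three conditions listed there. A connected graph on three vertices is either a path on three vertices or a triangle, with arbitrary admissible edge labels, so the whole argument amounts to a short finite case analysis.

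First I would dispatch every case in which $A(\Gamma)$ is locally reducible: there condition~(2) of Theorem~\ref{teojs} applies directly. This already covers every path $\Gamma$ on three vertices, since such a $\Gamma$ contains no triangle and is therefore locally reducible by the triangle criterion of \cite{C} (vacuously). It also covers every triangle whose only spherical special subgroups have rank at most $2$ --- for instance the reducible spherical triangles $\BZ^{3}$ and $\BZ\times I_2(p)$, and non-spherical triangles such as the one with all labels $3$.

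It then remains to handle the triangles $\Gamma$ for which $A(\Gamma)$ is \emph{not} locally reducible. By the triangle criterion of \cite{C}, this forces the three labels of $\Gamma$ to be $2,m,n$ with $m,n\geq 3$ and $\tfrac1m+\tfrac1n>\tfrac12$; a one-line estimate (if $3\leq m\leq n$ then $\tfrac2m\geq\tfrac1m+\tfrac1n>\tfrac12$, so $m=3$, and then $\tfrac1n>\tfrac16$, so $n\leq 5$) shows the only possibilities for $(m,n)$ up to order are $(3,3)$, $(3,4)$ and $(3,5)$. In the convention of Section~2 --- where a commuting pair of generators is recorded as an edge labelled $2$, so that the defining graphs of irreducible spherical Artin groups are complete --- these three triangles are precisely the underlying graphs of $A_3$, $B_3$ and $H_3$. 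As none of these is of type $E_6$, $E_7$ or $E_8$, condition~(3) of Theorem~\ref{teojs} applies, finishing the proof.

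I do not anticipate a real obstacle: the content is entirely in the bookkeeping. The two points needing care are, first, matching the three exceptional triangles with $A_3$, $B_3$, $H_3$ in the present labelling convention, and second, confirming that every remaining triangle (non-spherical, or spherical but reducible) is genuinely locally reducible, i.e.\ that all of its spherical special subgroups split as direct products of irreducible spherical Artin groups of rank $\leq 2$. Both are immediate once the definitions of Section~2 are unwound.
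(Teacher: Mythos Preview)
Your proposal is correct and follows essentially the same route as the paper's own proof: split into the path and triangle cases, handle every locally reducible case via condition~(2) of Theorem~\ref{teojs}, and observe that the only non--locally-reducible triangles have labels $(2,3,3)$, $(2,3,4)$, $(2,3,5)$, i.e.\ types $A_3$, $B_3$, $H_3$, which fall under condition~(3). Your write-up is a bit more detailed (you spell out the inequality forcing $(m,n)\in\{(3,3),(3,4),(3,5)\}$), but the structure and content are the same.
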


\begin{proof}
  There are two possibilities for $\Gamma$: it may be a path of length two or a triangle.

  If the first possibility is true, then $A$ is locally reducible since $\Gamma$ contains no triangles.

  Suppose $\Gamma$ is a triangle and $A$ is not locally reducible. Then the labels of the edges of $\Gamma$ need to be $(2,3,3)$, $(2,3,4)$ or $(2,3,5)$. Those are exactly the labels of the irreducible spherical types $A_3$, $B_3$ and $H_3$, respectively.

  So the result follows from Theorem \ref{teojs}.
\end{proof}

As a consequence of Lemma \ref{lemapre3v}, we prove a sufficient condition for $A$ to be not LERF.

\begin{lema}\label{lema3v}
If $\Gamma$ contains a full connected subgraph $\gamma$ of exactly three vertices with at most one of the edges labeled by 2 then $A=A(\Gamma)$ is not LERF.
\end{lema}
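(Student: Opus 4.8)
The plan is to reduce, in two steps, the non-separability of $A=A(\Gamma)$ to a statement about a Right-Angled Artin group to which Theorem~\ref{teoraag} applies. First, since $\gamma$ is a \emph{full} subgraph of $\Gamma$, the special subgroup $A(\gamma)$ is isomorphic to a subgroup of $A(\Gamma)$, so by Theorem~\ref{teosublerf} it suffices to prove that $A(\gamma)$ is not LERF. Second, $\gamma$ is connected with exactly three vertices, so Lemma~\ref{lemapre3v} gives the Generalized Tits Conjecture for $A(\gamma)$; hence $R(A(\gamma))$ embeds in $A(\gamma)$, and by Theorem~\ref{teosublerf} again it is enough to show that the RAAG $R(A(\gamma))$ is not LERF. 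By Theorem~\ref{teoraag}, for this it suffices to exhibit a full path of length three in the defining graph $\Gamma'$ of $R(A(\gamma))$.

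I would then analyse $\Gamma'$ according to the shape of $\gamma$; write $a,b,c$ for its three vertices (so $\gamma$ is either a path or a triangle). Two facts are used throughout: the singletons $\{a\},\{b\},\{c\}$ always lie in $\mathbb{S}(A(\gamma))$, and $z_x,z_y$ are joined in $\Gamma'$ exactly when $x,y$ are $2$-adjacent in $\gamma$; moreover an edge $\{x,y\}$ of $\gamma$ of label $\geq 3$ lies in $\mathbb{S}(A(\gamma))$ and, by the containment clause in the presentation of $R$, the generator $z_{xy}$ commutes with $z_x$ and with $z_y$. \emph{Case 1: $\gamma$ has two edges of label $\geq 3$ sharing a vertex $v$.} This covers every triangle satisfying the hypothesis (at most one of its three edges is a $2$, so at least two are $\geq 3$, and any two edges of a triangle meet) and every path both of whose edges are $\geq 3$. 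If $u,w$ denote the other two endpoints, I claim $z_u - z_{uv} - z_v - z_{vw}$ is a full path of length three: the three consecutive commutations are instances of the previous remark, while each of the three possible extra commutations $[z_u,z_v]$, $[z_u,z_{vw}]$, $[z_{uv},z_{vw}]$ fails, because $u$ and $v$ are not $2$-adjacent, so none of $[\{u\},\{v\}]$, $[\{u\},\{v,w\}]$, $[\{u,v\},\{v,w\}]$ equals $1$ and the relevant containments among these subsets also fail. \emph{Case 2: $\gamma$ is a path with exactly one edge of label $2$.} Up to relabelling, $a,b$ are $m$-adjacent with $m\geq 3$ and $b,c$ are $2$-adjacent; then $\mathbb{S}(A(\gamma))=\{\{a\},\{b\},\{c\},\{a,b\}\}$ (the edge $\{b,c\}$ generates the reducible group $A_1\times A_1$, $\{a,c\}$ is a non-edge, and $\gamma$ itself is not spherical), and a direct check of the defining relations of $R(A(\gamma))$ shows $\Gamma'$ is itself the path $z_c - z_b - z_{ab} - z_a$, again a full path of length three.

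In either case Theorem~\ref{teoraag} shows $R(A(\gamma))$ is not LERF, and reversing the two reductions through Theorem~\ref{teosublerf} yields that $A(\Gamma)$ is not LERF. The step needing the most care is the fullness verification in Case~1 --- that no further relation of $R(A(\gamma))$ joins two of the four chosen generators --- but since the commutations of $R(A(\gamma))$ among the subsets of $\{a,b,c\}$ are controlled entirely by the $2$-adjacencies of $\gamma$, and the label $\geq 3$ on the edge $\{u,v\}$ rules out every candidate relation at once, this is a short finite check rather than a genuine obstacle.
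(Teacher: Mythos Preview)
Your argument is correct and follows the same strategy as the paper: pass to the special subgroup $A(\gamma)$, invoke the Generalized Tits Conjecture (Lemma~\ref{lemapre3v}) to embed $R(A(\gamma))$, and then exhibit a full length-three path in its defining graph so that Theorem~\ref{teoraag} and Theorem~\ref{teosublerf} finish the job. The only difference is organizational: the paper splits into four cases (triangle/path, zero/one $2$-label) and in the one-$2$ triangle case uses the path $z_b,z_{\{a,b\}},z_a,z_c$, whereas you merge three of those cases into a single observation that two $\geq 3$ edges meeting at $v$ always yield the full path $z_u,z_{\{u,v\}},z_v,z_{\{v,w\}}$ --- a tidy unification, and your fullness check (that the label $\geq 3$ on $\{u,v\}$ kills each of $[\{u\},\{v\}]$, $[\{u\},\{v,w\}]$, $[\{u,v\},\{v,w\}]$) is sound.
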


\begin{proof}

  Let $B:=A(\gamma)$, isomorphic to a subgroup of $A$. By Lemma \ref{lemapre3v}, $B$ satisfies the Generalized Tits Conjecture hence $$R(B)\leq B\leq A.$$

There are various possibilities for $\gamma$ and we will see that in all of them $R(B)$ is not LERF. That implies $A$ is not LERF, by Theorem \ref{teosublerf}.

Suppose $\gamma$ is a triangle. Then we have two possibilities:

  \begin{enumerate}
  \item
  {\bfseries $\gamma$ has no edges labeled by 2.}

  In that case, $\gamma$ may be written as

  \begin{equation*}\label{eqred4}
\begin{tikzcd}
a  \ar[dash, bend left=80]{rr}{p}  \ar[dash]{r}{m} & b \ar[dash]{r}{n} & c,
\end{tikzcd}
\end{equation*}

with $m,n,p>2$. So
$$\mathcal{S}(B)=\{a, b, c, x=\{a,b\}, y=\{b,c\}, z=\{a,c\}\}.$$

Note that $B$ is not irreducible spherical, since $\gamma$ has no edge labeled by 2.

So $R(B)$ is the RAAG with

\[
\begin{tikzcd}
&&z \ar[dash]{d} \ar[dash]{dll}\\
a  & b  & c\\
x \ar[dash]{u} \ar[dash]{ur}&  y \ar[dash]{u} \ar[dash]{ur}     &
\end{tikzcd}
\]

as underlying graph. By Theorem \ref{teoraag}, $R(B)$ is not LERF, since its underlying graph contains a full path of length three (\eg $a,x,b,y$).

\item
    {\bfseries $\gamma$ has exactly one edge labeled by 2.}

In that case, $\gamma$ may be written as

    \begin{equation*}\label{eqred2}
\begin{tikzcd}
a \ar[dash, bend left=80]{rr}{2} \ar[dash]{r}{m} & b \ar[dash]{r}{n} & c,
\end{tikzcd}
\end{equation*}

with $m,n>2$. So
$$\mathcal{S}(B)\supset\{a, b, c, x=\{a,b\}, y=\{b, c\}\}=:T.$$

We may not have equality on the above equation because if $B$ is irreducible spherical then $\mathcal{S}(B)$ contains another vertex $z=\{a,b,c\}$. Either way the full subgraph (of the underlying graph of $R(B)$) generated by $T$ is

\begin{equation}\label{eqredgrafo}
\begin{tikzcd}
a \ar[dash, bend left=80]{rr} & b & c\\
   x  \ar[dash]{u} \ar[dash]{ur} \ar[dash]{rr}  &  & y \ar[dash]{ul} \ar[dash]{u}
\end{tikzcd}.
\end{equation}

Let $C$ be the RAAG with \eqref{eqredgrafo} as underlying graph. Then $C\leq R(B)$.

By Theorem \ref{teoraag}, $C$ is not LERF, since it contains a full path of length three (\eg $b,x,a,c$). Then $R(B)$ is not LERF, by Theorem \ref{teosublerf}.

\end{enumerate}

Now suppose $\gamma$ is not a triangle. Then $B$ is not irreducible spherical, since $\gamma$ is not a complete graph. We have two possibilities for $\gamma$:

\begin{enumerate}
  \item
  {\bfseries $\gamma$ has no edges labeled by 2}

In that case, $\gamma$ may be written as

 \begin{equation*}
\begin{tikzcd}
a  \ar[dash]{r}{m} & b \ar[dash]{r}{n} & c,
\end{tikzcd}
\end{equation*}

with $m,n>2$. So
$$\mathcal{S}(B)=\{a, b, c, x=\{a,b\}, y=\{b,c\}\}.$$

Then $R(B)$ is the RAAG with

\[
\begin{tikzcd}
a  & b  & c\\
x \ar[dash]{u} \ar[dash]{ur}&       & y \ar[dash]{u} \ar[dash]{ul}
\end{tikzcd}
\]

as underlying graph. By Theorem \ref{teoraag}, $R(B)$ is not LERF, because its underlying graph contains a full path of length three (\eg $a,x,b,y$).

\item
{\bfseries $\gamma$ has exactly one edge labeled by 2.}

In that case, $\gamma$ may be written as

 \begin{equation*}\label{eqred3}
\begin{tikzcd}
a  \ar[dash]{r}{m} & b \ar[dash]{r}{2} & c,
\end{tikzcd}
\end{equation*}

with $m>2$. So

$$\mathcal{S}(B)=\{a, b, c, x=\{a,b\}\}.$$

Then $R(B)$ is the RAAG with

\[
\begin{tikzcd}
a  & b \ar[dash]{r} & c\\
x \ar[dash]{u}  \ar[dash]{ur}&       &
\end{tikzcd}
\]

as underlying graph. By Theorem \ref{teoraag}, $R(B)$ is not LERF, since its underlying graph contains (is actually equal to) a full path of length 3.
\end{enumerate}
\end{proof}

The next two lemmas deal with some important specific examples.

\begin{lema}\label{lemauvw}
  Let $A=A(\Gamma)$ be a LERF Artin group and let $u,v\in V(\Gamma)$. If $v$ is $m-adjacent$ to $u$, with $m>2$, then every other vertex that is adjacent to $u$ is 2-adjacent to $u$ and $v$.
\end{lema}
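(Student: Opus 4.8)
The statement says: if $A(\Gamma)$ is LERF and $v$ is $m$-adjacent to $u$ with $m > 2$, then any other vertex $w$ adjacent to $u$ must in fact be $2$-adjacent to both $u$ and $v$. The plan is to argue by contrapositive: assume there is a vertex $w$, adjacent to $u$, that is either not $2$-adjacent to $u$, or not $2$-adjacent to $v$ (this second case includes the possibility that $w$ is not adjacent to $v$ at all). In each case I want to produce a full connected subgraph on three vertices with at most one edge labeled $2$, and then invoke Lemma \ref{lema3v} to conclude $A(\Gamma)$ is not LERF, contradicting the hypothesis.

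The natural candidate subgraph is the one on $\{u, v, w\}$. We already know the edge $uv$ is labeled $m > 2$. First suppose $w$ is adjacent to $u$ but \emph{not} $2$-adjacent to $u$; then the edge $uw$ carries a label $> 2$, so the full subgraph on $\{u,v,w\}$ has at least two edges ($uv$ and $uw$) both labeled $> 2$, hence at most one edge labeled $2$, and it is connected. By Lemma \ref{lema3v}, $A(\Gamma)$ is not LERF — contradiction. So we may assume $w$ \emph{is} $2$-adjacent to $u$. Now suppose $w$ is not $2$-adjacent to $v$. If $w$ is adjacent to $v$ with some label $> 2$, then the full subgraph on $\{u,v,w\}$ is a triangle with edges $uv$ (label $m>2$), $vw$ (label $>2$), $uw$ (label $2$) — again at most one edge labeled $2$, connected, so Lemma \ref{lema3v} applies. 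If instead $w$ is not adjacent to $v$ at all, the full subgraph on $\{u,v,w\}$ is the path $v - u - w$ with edge labels $m > 2$ and $2$; this is a full connected subgraph on three vertices with exactly one edge labeled $2$, so once more Lemma \ref{lema3v} gives a contradiction.

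Having ruled out every possibility except "$w$ is $2$-adjacent to $u$ and $2$-adjacent to $v$," the lemma follows. I expect the only subtlety to be bookkeeping: one must be careful to let $\gamma$ denote the \emph{full} subgraph on $\{u,v,w\}$ (so that Lemma \ref{lema3v}'s fullness hypothesis is met), and to treat the "$w$ not adjacent to $v$" case, where $\gamma$ is a path rather than a triangle — but Lemma \ref{lema3v} covers both the triangle and path cases uniformly, so no extra work is needed there. There is no hard analytic step; the content is entirely in organizing the case split so that each branch hands Lemma \ref{lema3v} a three-vertex full connected subgraph with at most one $2$-labeled edge.
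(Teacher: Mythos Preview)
Your proof is correct and follows essentially the same approach as the paper: both take the full subgraph $\gamma$ on $\{u,v,w\}$ and invoke Lemma~\ref{lema3v} (via its contrapositive) to force at least two edges of $\gamma$ to be labeled $2$, which, since the edge $uv$ has label $m>2$, forces both $uw$ and $vw$ to be present and labeled $2$. The paper compresses this into two lines while you spell out the case analysis explicitly, but the argument is the same.
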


\begin{proof}
  Let $\gamma$ be the full subgraph of $\Gamma$ generated by $\{u,v,w\}$, with $w$ adjacent to $u$.  Then $A(\gamma)$ is LERF by Theorem \ref{teosublerf}. By Lemma \ref{lema3v}, $\gamma$ has at least two edges labeled by 2, so the result follows.
\end{proof}

\begin{lema}\label{lemasquare}
 Let $A=A(\Gamma)$ be a LERF Artin group. Let $v,u\in V(\Gamma)$ be such that $v$ and $u$ are $m-adjacent$, with $m>2$. Then every other two vertices of $\Gamma$ that are adjacent to $u$ are 2-adjacent to each other.
\end{lema}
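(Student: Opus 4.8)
The statement to prove is Lemma~\ref{lemasquare}: if $A=A(\Gamma)$ is LERF, $u,v$ are $m$-adjacent with $m>2$, and $w_1,w_2$ are two further vertices both adjacent to $u$, then $w_1$ and $w_2$ are $2$-adjacent to each other. The plan is to localize to the full subgraph $\gamma$ generated by $\{u,v,w_1,w_2\}$, whose Artin group $A(\gamma)$ is a special subgroup of $A$ and hence LERF by Theorem~\ref{teosublerf}; then derive a contradiction from the assumption that $w_1$ and $w_2$ are \emph{not} $2$-adjacent (either not adjacent at all, or adjacent with a label $>2$).

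\medskip

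First I would apply Lemma~\ref{lemauvw} inside $\gamma$: since $v$ is $m$-adjacent to $u$ with $m>2$ and $w_1,w_2$ are adjacent to $u$, each $w_i$ must be $2$-adjacent to both $u$ and $v$. In particular $\{u,v\}$ together with either $w_i$ forms a triangle with exactly one edge (the $\{u,v\}$ edge) labeled $>2$. Now consider the two cases for the pair $(w_1,w_2)$. If $w_1,w_2$ are not adjacent in $\Gamma$, then $\gamma$ contains the full path $w_1 - u - v$ of length two with the edges labeled $2$ and $m>2$ — but I should instead look for a RAAG obstruction via the Generalized Tits Conjecture: the subgraphs in $\mathbb{S}(A(\gamma))$ include the single vertices, the $2$-adjacent pairs $\{u,w_1\},\{u,w_2\},\{v,w_1\},\{v,w_2\}$, and the $>2$-pair $\{u,v\}$; in $R(A(\gamma))$ these spherical subgraphs generate a RAAG containing a full path of length three, contradicting Theorem~\ref{teoraag} exactly as in the proof of Lemma~\ref{lema3v}. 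If instead $w_1,w_2$ are adjacent with label $n>2$, then the full subgraph on $\{u,w_1,w_2\}$ is a connected three-vertex graph with at most one edge (the $\{u,w_1\}$ or $\{u,w_2\}$ edge, if either is labeled $2$) — wait, both $\{u,w_1\}$ and $\{u,w_2\}$ could be labeled $2$, so this triangle has edges $(2,2,n)$ with $n>2$, which has \emph{two} edges labeled $2$ and so escapes Lemma~\ref{lema3v}. So the direct route through Lemma~\ref{lema3v} on a triangle does not immediately close this case.

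\medskip

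The cleaner approach, which I would adopt, is to work with the four-vertex subgraph $\gamma$ and extract a RAAG subgroup of $R(A(\gamma))$ big enough to contain a square or a path of length three. By Theorem~\ref{teojs}, the Generalized Tits Conjecture holds for $A(\gamma)$ provided $A(\gamma)$ is locally reducible or has no $3$-labeled edges; since $m,n$ may equal $3$ one must be slightly careful, but the spherical special subgraphs of $\gamma$ that matter are the $2$-adjacent pairs and single vertices, so one can argue that the relevant portion of $R(A(\gamma))$ embeds regardless (or reduce to a subgraph where Theorem~\ref{teojs} applies). Then: the vertices $z_{\{u,w_1\}}$ and $z_{\{v,w_2\}}$ commute (disjoint $2$-adjacent pairs, and $u$--$w_2$, $v$--$w_1$, $u$--$v$... no: $[\Sigma,\Lambda]=1$ requires \emph{every} cross pair to be $2$-adjacent, and $u$--$v$ is $m>2$-adjacent, so $z_{\{u,w_1\}}$ and $z_{\{v,w_2\}}$ do \emph{not} commute); the correct generators to use are $z_u, z_{w_1}, z_{w_2}$ and $z_{\{u,w_1\}}$: here $z_{w_1}$ and $z_{w_2}$ commute iff $w_1,w_2$ are $2$-adjacent, $z_{w_1}$ and $z_u$ do not commute ($u$--$w_1$ is an edge but the pair $\{u,w_1\}\in\mathbb{S}$ so by the RAAG relations $z_{w_1}$ and $z_{\{u,w_1\}}$ commute while $z_u$ and $z_{\{u,w_1\}}$ commute too), and $z_{w_2}$ commutes with $z_u$ iff $u$--$w_2$ is $2$-adjacent (which it is by Lemma~\ref{lemauvw}). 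Assembling this, when $w_1,w_2$ are not $2$-adjacent one obtains in $R(A(\gamma))$ a full square or full path of length three on suitable $z$-generators, contradicting Theorem~\ref{teoraag}, and hence contradicting that $A$ is LERF.

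\medskip

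\textbf{Main obstacle.} The delicate point is bookkeeping: correctly determining, for the various label configurations, which subgraphs lie in $\mathbb{S}(A(\gamma))$ (in particular whether $\{u,v,w_i\}$ or larger sets are irreducible spherical — they are not, since the $u$--$v$ label is $>2$ and forces, e.g., non-finiteness unless the triangle is of type $B_3,H_3$, etc.) and which $z$-generators commute in $R(A(\gamma))$, then exhibiting the forbidden full path/square on the nose. A secondary technical point is justifying the Generalized Tits Conjecture for $\gamma$: if some label equals $3$ one cannot cite clause~(1) of Theorem~\ref{teojs}, so one should either verify local reducibility of $A(\gamma)$ directly (the only triangle in $\gamma$ is $\{u,v,w_i\}$ with labels $(2,2,m)$ — wait, that is not the $u$--$v$ triangle; the triangle containing the $u$--$v$ edge has labels $(2,2,m)$ or $(2,m,\text{something})$ depending on adjacencies, and $1/2+1/m\le 1/2$ fails, so $A(\gamma)$ need not be locally reducible) or pass to a smaller full subgraph on which Theorem~\ref{teojs} does apply. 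I expect the write-up to handle this by a short case split mirroring Lemma~\ref{lema3v}, each case ending in an explicit non-LERF RAAG subgroup.
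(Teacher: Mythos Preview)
Your plan is workable in principle but you are taking a detour around a much shorter route, and you left the argument unfinished. The paper does not use the Generalized Tits Conjecture here at all: after invoking Lemma~\ref{lemauvw} exactly as you do (so that $w_1,w_2$ are each $2$-adjacent to both $u$ and $v$), it applies the \emph{original} Tits Conjecture, Theorem~\ref{teocp} (Crisp--Paris), which is known for every Artin group with no hypotheses to check. Applied to the full subgraph $\gamma$ on $\{u,v,w_1,w_2\}$, Theorem~\ref{teocp} produces a RAAG subgroup whose underlying graph has the same four vertices, with an edge precisely where $\gamma$ has a $2$-labeled edge. Since the four $2$-edges are $u\!-\!w_1$, $u\!-\!w_2$, $v\!-\!w_1$, $v\!-\!w_2$, while $u\!-\!v$ has label $m>2$ and $w_1\!-\!w_2$ is by assumption not $2$-adjacent, this RAAG graph is a square, hence not LERF by Theorem~\ref{teoraag}. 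That is the whole proof.

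Your route via $R(A(\gamma))$ would in fact succeed, but you did not close it. First, your worry about applicability of Theorem~\ref{teojs} dissolves: every triangle in $\gamma$ has labels $(2,2,k)$ (since $w_i$ is $2$-adjacent to both $u$ and $v$), so the ``$(2,p,q)$ with $p,q\ge 3$'' condition is vacuous and $A(\gamma)$ is locally reducible. Second, the bookkeeping you got tangled in is unnecessary: the full subgraph of the defining graph of $R(A(\gamma))$ spanned by the \emph{singleton} generators $z_u,z_v,z_{w_1},z_{w_2}$ already has edges exactly at the $2$-adjacent pairs, i.e.\ it is the same square as above. In other words, restricting $R(A(\gamma))$ to singletons recovers precisely the Crisp--Paris RAAG, so your approach collapses to the paper's once you stop looking at the rank-$2$ generators $z_{\{u,w_i\}}$. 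The moral: for this lemma the extra generators of the Generalized Tits Conjecture buy nothing, and reaching for it created the very obstacles you flagged.
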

\begin{proof}
   Suppose $x,y\in V(\Gamma)\setminus \{v,u\}$ such that $x\neq y$ are adjacent to $u$ and $x$ is not 2-adjacent to $y$. By Lemma \ref{lemauvw}, $x$ and $y$ are 2-adjacent to $u$ and $v$. Let $\gamma$ be the full subgraph generated by $\{x,y,u,v\}$, which may be written as
   \[
\begin{tikzcd}
x  \ar[dash]{rrr}{2} \ar[dash,near end]{drrr}{n} &&& u \ar[dash]{d}{2}  \\
v \ar[dash]{u}{2} \ar[dash, near end]{urrr}{m}  &&&  y, \ar[dash]{lll}{2}
 \end{tikzcd}
\]

with $n>2$ (consider $n=\infty$ if $x$ and $y$ are not adjacent). By Theorem \ref{teocp}, $A(\gamma)$ contains a right-angled Artin subgroup $B$ whose underlying graph is a square. The group $B$ is not LERF by Theorem \ref{teoraag}, hence $A$ is not LERF by Theorem \ref{teosublerf}, which is a contradiction.
\end{proof}

Now we need a couple of definitions to ease notation.

Let $\Gamma'$ be a graph. Then we will denote the 2-cone of $\Gamma'$ with apex $u$ (as defined in the introduction) as $\Gamma' + \{u\}$.  If $\Gamma$ is a graph, we also define
$$Z(\Gamma):=\{u\in V(\Gamma)\tq \text{$u$ is 2-adjacent to every $v\in V(\Gamma)$ such that $v\neq u$}\}.$$
Note that if $u\in Z(\Gamma)$ then $\Gamma=\Gamma' + \{u\}$, where $\Gamma'$ is the full subgraph generated by $V(\Gamma)\setminus \{u\}$.

We will also need the following easy facts, that follow from the definition of Artin groups: If $\Gamma'$, $\Gamma_1$ and $\Gamma_2$ are graphs then $A(\Gamma'+\{u\})\simeq A(\Gamma')\times \BZ$ and $A(\Gamma_1\sqcup \Gamma_2)\simeq A(\Gamma_1)\ast A(\Gamma_2)$, where $\Gamma_1 \sqcup \Gamma_2$ is the disjoint union of the graphs.

Most of the work of proving Theorem A is dealing with the connected case, which we do in the next two lemmas.

\begin{lema}\label{lemaconcase}
  Let $A=A(\Gamma)$ be a LERF Artin group such that $\Gamma$ is connected. Let $v\in V(\Gamma)$  and let $\Gamma_0$ be the full subgraph of $\Gamma$ generated by $V(\Gamma)\setminus \{v\}$.  If $\Gamma_0$ is disconnected then $v\in Z(\Gamma)$.
\end{lema}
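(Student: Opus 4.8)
The plan is to argue by contradiction. Suppose $v\notin Z(\Gamma)$; I will produce, inside a right-angled special-type subgroup of $A$, a full path of length three, contradicting the Metaftsis--Raptis criterion (Theorem \ref{teoraag}) together with Theorem \ref{teosublerf}. The extra ingredient beyond the lemmas already proved is Theorem \ref{teocp} with $N=2$: the subgroup of $A$ generated by the squares of the generators is the RAAG on the graph $\Gamma^{(2)}$ obtained from $\Gamma$ by deleting every edge whose label is greater than $2$ (same vertex set, edges exactly the $2$-adjacencies). Since $A$ is LERF, so is this RAAG by Theorem \ref{teosublerf}, hence $\Gamma^{(2)}$ contains no full path of length three and no full square.

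First I would record what the hypotheses give. Since $\Gamma$ is connected and $\Gamma_0=\Gamma-v$ is disconnected, $v$ is a cut vertex: it has at least one neighbour in each connected component $C_1,\dots,C_k$ (with $k\geq 2$) of $\Gamma_0$, and two vertices lying in distinct components $C_i$ are non-adjacent in $\Gamma$ because $\Gamma_0$ is a full subgraph. Now $v\notin Z(\Gamma)$ means either $v$ is $m$-adjacent to some vertex with $m>2$, or $v$ is non-adjacent to some vertex. In the first situation, say $v$ is $m$-adjacent to $w$ with $w\in C_1$: Lemma \ref{lemauvw} forces every neighbour of $v$ other than $w$ to be $2$-adjacent to $w$, but a neighbour $t$ of $v$ chosen in a different component $C_2$ satisfies $t\neq w$ and cannot be adjacent to $w$ at all, a contradiction. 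So after this case I may assume every edge at $v$ is labelled $2$, and then there is a vertex $w'$ not adjacent to $v$ at all.

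The remaining case is the heart of the argument. Take a shortest path $v=x_0,x_1,\dots,x_n=w'$ in $\Gamma$; then $n\geq 2$ and $x_i$ is non-adjacent to $x_j$ whenever $|i-j|\geq 2$. Each consecutive triple $x_{i-1},x_i,x_{i+1}$ therefore spans a full path on three vertices, so by Lemma \ref{lema3v} both of its edges must be labelled $2$; letting $i$ run from $1$ to $n-1$ shows the whole path consists of $2$-edges, so it is a full path inside $\Gamma^{(2)}$. If $n\geq 3$ its first four vertices already form a full path of length three in $\Gamma^{(2)}$, which is impossible. If $n=2$, then $x_1$ lies in the component $C_1$ of $w'$ (being a neighbour of $w'\in\Gamma_0$), and choosing a neighbour $q$ of $v$ in some other component $C_2$ --- which is $2$-adjacent to $v$ by the previous paragraph --- the vertices $q,v,x_1,w'$ span a full path of length three in $\Gamma^{(2)}$: the only $2$-adjacencies among them are the pairs $\{q,v\}$, $\{v,x_1\}$, $\{x_1,w'\}$, since $q$ is separated from $x_1$ and $w'$ in $\Gamma_0$ and $v$ is non-adjacent to $w'$. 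This contradiction finishes the proof, so $v\in Z(\Gamma)$.

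The step I expect to be the main obstacle is this last case: the key non-obvious point is that a shortest path between $v$ and a non-neighbour is automatically built from $2$-edges only (this is exactly where Lemma \ref{lema3v} does the work), together with the slightly fiddly observation that even when that path has length exactly two one can still complete a full path of length three by reaching into a second component of $\Gamma_0$ and using that all edges at $v$ are now $2$-edges. Everything else reduces to Theorem \ref{teoraag} once the RAAG on $\Gamma^{(2)}$ is brought in.
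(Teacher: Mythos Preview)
Your argument is correct and follows essentially the same route as the paper: first use Lemma~\ref{lema3v} on three-vertex full subgraphs to force every edge at $v$ to carry label~$2$, and then, if some vertex fails to be adjacent to $v$, walk along a path to produce a full length-$3$ path with all labels~$2$ and invoke Theorem~\ref{teoraag}. The paper organises the second step slightly more directly (it stays inside one component of $\Gamma_0$, picks the last non-neighbour $u'$ of $v$ along a path to a neighbour $u_i$, and immediately writes down $u',u_i',v,u_j$), so it never needs your case split $n\geq 3$ versus $n=2$; and it applies Theorem~\ref{teoraag} to the special subgroup on that four-vertex full subgraph rather than passing through $\Gamma^{(2)}$ via Theorem~\ref{teocp}. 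Since your length-$3$ paths are in fact full in $\Gamma$ with all labels $2$, the detour through Crisp--Paris is unnecessary here, but it does no harm.
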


\begin{proof}
  Let $\Gamma_0=\Gamma_1\sqcup \Gamma_2\sqcup \cdots \sqcup \Gamma_n$ be the disjoint union of the connected components of $\Gamma_0$, with $n\geq 2$. Since $\Gamma$ is connected, then $v$ is adjacent to some vertex $u_i \in V(\Gamma_i)$ for each $i$.

   Note that if $u\in V(\Gamma)$ is adjacent to $v$ then $v$ is adjacent to some vertex of $\Gamma$ that is not adjacent to $u$ hence Lemma \ref{lema3v} implies $u$ is 2-adjacent to $v$. That means every vertex of $\Gamma$ that is adjacent to $v$ is actually 2-adjacent to $v$.

  Now let $u\in V(\Gamma_i)$ be a vertex that is not adjacent to $v$. Since $\Gamma_i$ is connected then there is a path $\gamma$ that connects $u$ and $u_i$. Let $u'$ be the vertex of $\gamma$ as close to $u_i$ as possible such that $u'$ is not adjacent to $v$. That means there is a vertex $u_i'\in V(\gamma)$ that is adjacent to $u'$ and 2-adjacent to $v$. By Lemma \ref{lema3v}, $u'$ is 2-adjacent to $u_i'$. Let $\gamma'$ be the full subgraph generated by $\{u', u_i',  v, u_j\}$, for any $j\neq i$. Then $\gamma'$ is a full path of $\Gamma$ of length three with edges labeled by 2. The group $A(\gamma')$ is a subgroup of $A(\Gamma)$, but $A(\gamma')$ is not LERF by Theorem \ref{teoraag}. That contradicts Theorem \ref{teosublerf}.

  So every vertex of $\Gamma_0$ is 2-adjacent to $v$ hence the result holds.
\end{proof}

\begin{lema}\label{lemacongamma}
 Let $A=A(\Gamma)$ be a LERF Artin group such that $\Gamma$ is connected and has at least three vertices. Then $Z(\Gamma)\neq \emptyset$.
\end{lema}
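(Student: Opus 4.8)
The plan is to pass to the right-angled Artin subgroup $A(\Gamma')$ of $A$ furnished by Theorem \ref{teocp} (taking, say, $N=2$): here $V(\Gamma')=V(\Gamma)$ and two vertices are adjacent in $\Gamma'$ precisely when they are $2$-adjacent in $\Gamma$. Being a subgroup of the LERF group $A$, $A(\Gamma')$ is LERF by Theorem \ref{teosublerf}, so Theorem \ref{teoraag} tells us that $\Gamma'$ contains no full path of length three and no full square. Since a vertex lies in $Z(\Gamma)$ exactly when it is adjacent to every other vertex in $\Gamma'$, it suffices to produce a universal vertex of $\Gamma'$.

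\textbf{Step 1: $\Gamma'$ is connected.} Suppose not, and write $V(\Gamma)=V_1\sqcup V_2$ with $V_1$ a connected component of $\Gamma'$ and $V_2\neq\emptyset$. As $\Gamma$ is connected there is a $\Gamma$-edge joining some $a\in V_1$ to some $b\in V_2$; its label is some $m>2$, since a $2$-edge between $V_1$ and $V_2$ would put $a$ and $b$ in the same component of $\Gamma'$. Applying Lemma \ref{lemauvw} to this $m$-edge, every $\Gamma$-neighbour of $a$ other than $b$ is $2$-adjacent to both $a$ and $b$, hence lies simultaneously in the $\Gamma'$-component of $a$ and in that of $b$ --- impossible. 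Thus $a$, and symmetrically $b$, has $\Gamma$-degree one, so $\{a,b\}$ is a connected component of $\Gamma$; connectedness of $\Gamma$ then forces $|V(\Gamma)|=2$, contrary to hypothesis. (Alternatively, if $\Gamma$ has a cut vertex $v$ one could immediately invoke Lemma \ref{lemaconcase} to get $v\in Z(\Gamma)$; the argument just given handles all cases at once.)

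\textbf{Step 2: a connected graph with no full path of length three and no full square has a universal vertex.} Apply this to $\Gamma'$: pick a vertex $v$ of $\Gamma'$ of maximal degree and suppose it is not universal. By connectedness there is an edge $\{x,b\}$ of $\Gamma'$ with $x$ adjacent to $v$ and $b\notin N[v]$. If some $y\in N(v)$ were non-adjacent to $x$, then $y,v,x,b$ would span an induced path of length three, or an induced square in case $y$ is adjacent to $b$ --- both forbidden; hence $N(v)\subseteq N[x]$. Counting, $N(x)$ contains the disjoint sets $N(v)\setminus\{x\}$ and $\{b\}$, so $\deg x\geq\deg v$, forcing $\deg x=\deg v$ and $N(x)=(N(v)\setminus\{x\})\cup\{b\}$; but then $v\notin N(x)$ although $x$ is adjacent to $v$, a contradiction. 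So $v$ is universal in $\Gamma'$, i.e. $v\in Z(\Gamma)$, and $Z(\Gamma)\neq\emptyset$.

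The crux is Step 1. The only relevant new input is Lemma \ref{lemauvw}, and the key observation is that a ``bridge'' of $\Gamma$ carrying a label $>2$ between two $2$-components is forced by that lemma to be an isolated edge of $\Gamma$, which the hypothesis $|V(\Gamma)|\geq 3$ rules out. Once $\Gamma'$ is known to be connected, Step 2 is elementary graph theory --- it is precisely the structural content of the Metaftsis--Raptis description of LERF right-angled Artin groups --- and could be cited rather than reproved.
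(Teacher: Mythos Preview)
Your proof is correct and takes a genuinely different route from the paper's.

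The paper proves Lemma \ref{lemacongamma} by a direct induction on $|V(\Gamma)|$, splitting into the right-angled case and the case where $\Gamma$ has an edge of label $>2$, and then into several sub-cases according to whether removing a vertex disconnects $\Gamma$, whether the inductive $Z$-vertex is adjacent to the removed vertex, etc.; along the way it invokes all of Lemmas \ref{lema3v}, \ref{lemauvw}, \ref{lemasquare} and \ref{lemaconcase}. Your argument instead passes once and for all to the Crisp--Paris RAAG $A(\Gamma')$ of Theorem \ref{teocp}, observes that $Z(\Gamma)$ coincides with the set of universal vertices of $\Gamma'$, and then separates the problem cleanly into (i) an ``Artin'' step (connectedness of $\Gamma'$, using only Lemma \ref{lemauvw}) and (ii) a purely graph-theoretic step (a connected graph with no induced $P_4$ or $C_4$ has a universal vertex). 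This is shorter and more conceptual: it isolates exactly where the Artin-group input is needed and avoids Lemmas \ref{lemasquare} and \ref{lemaconcase} altogether. The paper's approach, by contrast, stays closer to $\Gamma$ itself and makes the structure of the induction more explicit, which may be pedagogically useful but is heavier. Your Step 2 could indeed be cited as the structural content of Theorem \ref{teoraag} for connected graphs, though the max-degree argument you give is clean and self-contained; note that it actually yields the slightly stronger inequality $\deg x \geq \deg v + 1$ directly (since $v\in N(x)$ as well), so the final contradiction comes one line earlier than you wrote.
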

\begin{proof}

We have two possibilities:

\begin{enumerate}
  \item
  {\bfseries All edges of $\Gamma$ are labeled by 2}

  Note that $A$ is a Right-Angled Artin group. We will prove the result by induction on $|V(\Gamma)|$. The case $|V(\Gamma)|=3$ is obvious. Suppose $|V(\Gamma)|> 3$ and assume the result holds for graphs that have less vertices than $\Gamma$.

  Let $v\in V(\Gamma)$ and let $\Gamma_0$ be the full subgraph generated by $V(\Gamma)\setminus \{v\}$. If $\Gamma_0$ is disconnected the result holds by Lemma \ref{lemaconcase}. If $\Gamma_0$ is connected, by induction hypothesis $\Gamma_0=\Gamma' + \{u\}$ for some $u\in \Gamma_0$. Since $\Gamma$ is connected, $v$ is adjacent to some vertex of $\Gamma$. There are two possibilities:

  \begin{enumerate}
    \item
    {\bfseries $v$ is adjacent to $u$.}

    Then $u\in Z(\Gamma)$ and the result holds.

    \item
    {\bfseries $v$ is not adjacent to $u$ and $v$ is adjacent to some $w\in V(\Gamma')$.}

    If $w$ is adjacent to every other vertex of $\Gamma'$, then $w\in Z(\Gamma)$ and the result holds. If that's not the case, then there is $t\in V(\Gamma')$ such that $w$ is not adjacent to $t$. Then the full subgraph of $\Gamma$ generated by $\{v, w, u, t \}$ is either a full square (if $v$ is adjacent to $t$) or a full path of length three (if $v$ is not adjacent to $t$), which is a contradiction by Theorem \ref{teoraag}.
  \end{enumerate}

\item
{\bfseries $\Gamma$ contains at least one edge labeled by $m>2$.}

We again proceed by induction on $|V(\Gamma)|$. Suppose $|V(\Gamma)|=3$. Then by Lemma \ref{lema3v} we have that $\Gamma$ is a triangle with labels $(m,2,2)$ hence $Z(\Gamma)\neq \emptyset$.

Suppose $|V(\Gamma)|> 3$ and assume the result holds for graphs that have less vertices than $\Gamma$.

Let $v\in V(\Gamma)$ and let $\Gamma_0$ be the full subgraph generated by $V(\Gamma)\setminus \{v\}$.  If $\Gamma_0$ is disconnected the result holds by Lemma \ref{lemaconcase}. If $\Gamma_0$ is connected, by induction hypothesis $\Gamma_0=\Gamma_0' + \{u\}$ for some $u\in V(\Gamma_0)$. There are two possibilities:

\begin{enumerate}
  \item
  {\bfseries $\Gamma_0$ contains an edge labeled by $m>2$.}

  We have three possibilities:

  \begin{enumerate}
    \item
    {\bfseries $v$ is $n$-adjacent to $u$, with $n>2$.}

    Since $u$ is 2-adjacent to every vertex of $V(\Gamma)\setminus \{u,v\}=V(\Gamma_0')$, so is $v$ by Lemma \ref{lemauvw}. Then $\Gamma_0'$ is complete with all edges labeled by 2, by Lemma \ref{lemasquare}. That contradicts hypothesis (a).

    \item
    {\bfseries $v$ is 2-adjacent to $u$.}

    Then $u\in Z(\Gamma)$ and the result holds.

    \item
    {\bfseries $v$ is not adjacent to $u$}

    Let $\Gamma_1$ be the full subgraph of $\Gamma$ generated by $V(\Gamma_0')\sqcup \{v\}=V(\Gamma)\setminus \{u\}$. If $\Gamma_1$ is disconnected the result holds by Lemma \ref{lemaconcase}. If $\Gamma_1$ is connected then $Z(\Gamma_1)\neq \emptyset$ by induction hypothesis. We have two possibilities:

    \begin{enumerate}
      \item
      {\bfseries $Z(\Gamma_1)\neq \{v\}$.}

      Then there is  $w\in Z(\Gamma_1)$ such that $w\neq v$ and $w$ is 2-adjacent to $u$, since $u\in Z(\Gamma_0)$. Then $w\in Z(\Gamma)$ hence the result holds.

      \item
      {\bfseries $Z(\Gamma_1)= \{v\}$.}

      Then $v$ is 2-adjacent to all vertices of $\Gamma$ except for $u$, to which $v$ is not adjacent. By hypothesis $\Gamma$ contains an edge labeled by $m>2$, let's say it connects $x,y\in V(\Gamma)$. Then $v$ is 2-adjacent to $x$ and $y$ and so is $u$. By Lemma \ref{lemasquare}, $v$ is 2-adjacent to $u$, which is a contradiction.

    \end{enumerate}
\end{enumerate}

 \item
  {\bfseries $\Gamma_0$ does not contain an edge labeled by $m>2$.}

Then by hypothesis $v$ is $m$-adjacent to a vertex of $\Gamma_0$. There are two possibilities:

\begin{enumerate}
  \item
  {\bfseries $v$ is $m$-adjacent to $u$.}

By Lemma \ref{lemauvw}, $v$ is 2-adjacent to every vertex of $V(\Gamma)\setminus \{u,v\}$. By Lemma \ref{lemasquare}, the full subgraph of $\Gamma$ generated by  $V(\Gamma)\setminus \{u,v\}$ is complete, with all edges labeled by 2. Since $v$ and $u$ are 2-adjacent to those vertices, then they all belong to $Z(\Gamma)$ hence the result holds.

  \item
  {\bfseries $v$ is $m$-adjacent to some $w\in V(\Gamma_0)$.}

  Since $w$ is 2-adjacent to $u$, then $v$ is 2-adjacent to $u$ by Lemma \ref{lemauvw}. Then $u\in Z(\Gamma)$ hence the result holds.

\end{enumerate}
\end{enumerate}

\end{enumerate}

\end{proof}

\begin{lema}\label{lema2v}
 If $\Gamma$ is connected and has at most two vertices then $A=A(\Gamma)$ is LERF.
\end{lema}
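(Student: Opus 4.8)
The plan is to enumerate the connected graphs on at most two vertices and check that each corresponding Artin group is LERF. There are exactly three cases: a single vertex, giving $A\simeq\BZ$; two vertices joined by an edge labeled $2$, giving $A\simeq\BZ\times\BZ\simeq\BZ^2$; and two vertices joined by an edge labeled $m\geq 3$, giving the dihedral-type Artin group $A(I_2(m))=\langle a,b\mid \underbrace{abab\cdots}_{m}=\underbrace{baba\cdots}_{m}\rangle$.

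The first two cases are immediate: a finitely generated abelian group is LERF, as recalled in the excerpt just after Theorem~\ref{teoonerel}. For the third case, I would invoke the known structure of the two-generator Artin group $A(I_2(m))$: it is a one-relator group, and since it is an irreducible spherical Artin group of rank $2$, it has infinite cyclic center by the fact cited from \cite{D} in Section~2 (for $I_2(m)$ this center is generated by $(ab)^m$ when $m$ is even and by $(ab)^{m/\gcd}$-type elements, but the only thing needed is that the center is nontrivial). Therefore Theorem~\ref{teoonerel}, asserting that every one-relator group with nontrivial center is LERF, applies directly and gives that $A(I_2(m))$ is LERF.

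Putting the three cases together proves that $A(\Gamma)$ is LERF whenever $\Gamma$ is connected with at most two vertices. I do not expect any genuine obstacle here: the statement is essentially a bookkeeping exercise assembling results already imported into the excerpt (finitely generated abelian groups are LERF, irreducible spherical Artin groups have infinite cyclic center, and one-relator groups with nontrivial center are LERF). The only mild subtlety is making sure the edge relation of $A(I_2(m))$ really is a single relator so that Theorem~\ref{teoonerel} applies — but this is immediate from the defining presentation of an Artin group on two vertices, which has exactly one relation.
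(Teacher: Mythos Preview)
Your proposal is correct and follows essentially the same route as the paper: both treat the one-vertex case as abelian, the two-vertex case with label $2$ as abelian, and the two-vertex case with label $m\geq 3$ as a one-relator irreducible spherical Artin group with infinite cyclic center, invoking Theorem~\ref{teoonerel}. The only cosmetic difference is that the paper names the rank-2 irreducible spherical types as $A_2$, $B_2$, $I_2(p)$ rather than using $I_2(m)$ uniformly.
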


\begin{proof}
If $\Gamma$ has only one vertex, then $A\simeq \BZ$ is abelian hence LERF.

If $\Gamma$ has two vertices, then $A$  is either abelian (if the edge label is equal to 2) or irreducible spherical (type $A_2$, $B_2$ or $I_2(p)$), hence it has infinite cyclic center. Since $A$ is by definition a one-relator group, the result follows from Theorem \ref{teoonerel}.
\end{proof}

We can now proceed to the proof of Theorem A.

\begin{proof}[Proof of Theorem A]

  If $\Gamma$ has at most two vertices then $A$ is LERF by Lemma \ref{lema2v}. If $\Gamma_1$ and  $\Gamma_2$ are disjoint graphs that belong to  $\mathcal{S}$ such that $A(\Gamma_1)$ and $A(\Gamma_2)$ are LERF, then $A(\Gamma_1\sqcup \Gamma_2)\simeq A(\Gamma_1)\ast A(\Gamma_2)$ is LERF by Theorem \ref{teofreeprod}. If $\Gamma\in \mathcal{S}$, $u\notin V(\Gamma)$ and $A(\Gamma)$ is LERF then $A(\Gamma+\{u\})\simeq A(\Gamma)\times \BZ$ is LERF by Lemma \ref{lemalerfzn}. So $\Gamma\in \mathcal{S}$ implies $A(\Gamma)$ is LERF.

  Now suppose $A=A(\Gamma)$ is LERF. We will prove that $\Gamma\in \mathcal{S}$ by induction on $|V(\Gamma)|$. If $|V(\Gamma)|\in \{1,2\}$, the result follows from the definition of $\mathcal{S}$.   Suppose $|V(\Gamma)|\geq 3$ and assume the result holds for graphs with less vertices than $\Gamma$.

  If $\Gamma$ is disconnected, then $\Gamma=\Gamma_1 \sqcup \Gamma_2$, with $\Gamma_1, \Gamma_2$ non-empty. Since $|V(\Gamma_i)|<|(V(\Gamma)|$ for i=1,2 and $A(\Gamma_i)$ is LERF for $i=1,2$ by Theorem \ref{teosublerf}, then $\Gamma_1, \Gamma_2\in \mathcal{S}$ by induction hypothesis. By definition of $\mathcal{S}$, we have $\Gamma=\Gamma_1 \sqcup \Gamma_2\in \mathcal{S}$.

  Now suppose $\Gamma$ is connected. Then by Lemma \ref{lemacongamma} we have $\Gamma=\Gamma' + \{u\}$ for some full non-empty subgraph $\Gamma'$ of $\Gamma$ and some $u\in V(\Gamma)$. The group $A(\Gamma')$ is LERF by Theorem \ref{teosublerf} and since $|V(\Gamma')|<|V(\Gamma)|$ then we have $\Gamma'\in \mathcal{S}$ by induction hypothesis. By definition of $\mathcal{S}$, $\Gamma=\Gamma' + \{u\}\in \mathcal{S}$.
\end{proof}

  \begin{proof}[Proof of Corollary A]
Let $A=A(\Gamma)$ be an Artin group. By Theorem A, $A$ is LERF if and only if $\Gamma\in \mathcal{S}$. So it is enough to prove that $A(\mathcal{S})$ is formed exactly by the Artin groups whose underlying graphs belong to $\mathcal{S}$, which easily follows from the comments before Lemma \ref{lemaconcase}.

\end{proof}

{\bfseries Acknowledgments}: We thank A. Minasyan for pointing out a mistake of ours and talking to us about Lemma \ref{lemalerfzn}. We also thank the referee for the careful reading and suggestions.

\end{document}